\numberwithin{equation}{section}
\let\Im=\undefined\DeclareMathOperator*{\Im}{Im}
\newcommand{\N}{\mathcal{N}}
\newtheorem{theorem}{Theorem}[section]
\newtheorem{lemma}[theorem]{Lemma}
\theoremstyle{definition}
\newtheorem{remark}[theorem]{Remark}
\newcommand{\Extend}[5]{\ext@arrow0099{\arrowfill@#1#2#3}{#4}{#5}}
\begin{document}
\title[Multilinear distorted multiplier estimate]{on   multilinear distorted  multiplier estimate and its applications}

\author{Kailong Yang}
\address{Chongqing National Center for Applied Mathematics, Chongqing Normal University, Chongqing, 401131, P.R.China}
\email{ykailong@mail.ustc.edu.cn}


\begin{abstract}
In this article, we investigate the  multilinear distorted  multiplier  estimate (Coifman-Meyer type theorem) associated with the Schr\"{o}dinger operator $H=-\Delta + V$ in the framework of the corresponding distorted Fourier transform. Our result is the ``distorted" analog of the multilinear Coifman-Meyer multiplier operator theorem in \cite{CM1}, which extends the bilinear estimates of Germain, Hani and Walsh's in \cite{PZS} to multilinear case  for all dimensions. As applications, we give the estimate of Leibniz's law of integer order derivations for the  multilinear distorted  multiplier for the first time and we obtain small data scattering for a kind of generalized mass-critical NLS with good potential in low dimensions $d=1,2$.

\end{abstract}

 \maketitle

\begin{center}
 \begin{minipage}{100mm}
   { \small {{\bf Key Words:}  multilinear estimate,  Schr\"{o}dinger operator, distorted Fourier transform, scattering, nonlinear Schr\"{o}dinger equation.}
      {}
   }\\
    { \small {\bf AMS Classification:}
      {35P25,  35Q55, 42B15, 42B10.}
      }
 \end{minipage}
 \end{center}


\section{Introduction}

\noindent

The study of multilinear pseudodifferential operators goes back to the pioneering works of $\mathrm{R}$. Coifman and Y. Meyer \cite{CM1,CM2,CM3,CM4}, since then, there has been a large amount of work on various generalisations of their results,  we will only make a rough list here.  After Lacey and Thiele's work \cite{MC1,MC2} on the bilinear Hilbert transform, with different assumptions on the symbols, the boundedness of multilinear operators in harmonic analysis in the classical Fourier transform setting have been well studied by many authors, for example, B\'{e}nyi and Torres \cite{AR1}, Gilbert and Nahmod \cite{JA1}, Grafakos and Kalton \cite{LN1}, Grafakos and Torres \cite{GT1}, Kenig and Stein \cite{CE1}  Muscalu, Tao and Thiele \cite{CT1} and  Tomita\cite{NT1}.

Some of the multilinear operators studied above are multilinear multipliers defined in the framework of the classical Fourier transform, and the classical Fourier transform of a function can be regarded as the projection into the eigenfunctions space of the  absolutely continuous spectrum of Laplacian operator $-\Delta$. For a given potential $V: \mathbb{R}^{d} \rightarrow \mathbb{R}$, consider the associated Schr\"{o}dinger operator $H:=$ $-\Delta+V$. When $V \in L^{2}\left(\mathbb{R}^{d}\right), H$ can be realized as a self-adjoint operator on $L^{2}\left(\mathbb{R}^{d}\right)$ with domain $D(H)=H^{2}\left(\mathbb{R}^{d}\right)$. We can impose a compactness condition on the multiplication operator associated with $V$ so that  the spectral properties of $H$ resemble those of $H_{0}=-\Delta$. We say that $V$ is short-range (or, of class SR) provided that
$$u \in H_{x}^{2}\left(\mathbb{R}^{d}\right) \mapsto(1+|x|)^{1+\epsilon} V u \in L_{x}^{2}\left(\mathbb{R}^{d}\right) \quad \text{is a compact operator},$$
for some $\epsilon>0$. It was shown by Agmon (cf.\cite{AS1}) that, for $V$ of class SR, $\sigma(H)=\left\{\lambda_{j}\right\}_{j \in J} \cup$ $[0, \infty)$; the continuous spectrum being $[0, \infty)$, and the discrete spectrum consisting of a countable set of real eigenvalues $\left\{\lambda_{j}\right\}$, each of finite multiplicity. Furthermore, we have the orthogonal decomposition
$$
L^{2}\left(\mathbb{R}^{d}\right)=L_{\mathrm{ac}}^{2}\left(\mathbb{R}^{d}\right) \oplus L_{\mathrm{p}}^{2}\left(\mathbb{R}^{d}\right),
$$
where $L_{\mathrm{p}}^{2}\left(\mathbb{R}^{d}\right)$ is the span of the eigenfunctions corresponding to the eigenvalues $\left\{\lambda_{j}\right\}$. and $L_{\mathrm{ac}}^{2}$ is the absolutely continuous subspace for $H$. Then we may try to define distorted Fourier transform on the absolutely continuous subspace for $H$, and multilinear distoerted multiplier, see Theorem \ref{ykl_th1} and \eqref{eqy1.11} respectively  for details below.

We investigate the estimate of the multilinear distorted multiplier(Coifman-Meyer type theorem) associated with the Schr\"{o}dinger operator $H=-\Delta + V$ in the framework of the corresponding distorted Fourier transform. As we know, there is only a small amount of research on this topic.  Germain, Hani and Walsh \cite{PZS} in 2015 investigated the bilinear estimates and applied it to the 3d quadratic nonlinear  Schr\"{o}dinger equations with a potential $V(x)$ to get global wellposedness for small data, in 2020, for the different kind of quadratic nonlinear terms, Pusateri and Soffer\cite{FA1} developed the corresponding bilinear estimates and used them to obtain similar results for the nonlinear  Schr\"{o}dinger equations with large potentials. Motivated by their work, we want to generalize the bilinear estimates to the multilinear case in this paper.

\subsection{Assumptions on the potential $V$}
Before stating our main results, let us now describe the assumptions we shall impose on $V$ as Germain, Hani and Walsh did in \cite{PZS}:

\begin{itemize}
  \item H1. Existence of distorted Fourier analysis (see remark \ref{yre1.1.2} for precise meaning below).
  \item H2. Absence of discrete spectrum for $-\Delta+V$.
  \item H3. $L^{p}$ boundedness of the wave operator $\Omega:=\lim\limits _{t \rightarrow-\infty} \mathrm{e}^{\mathrm{i} t H} \mathrm{e}^{-\mathrm{i} t \Delta}$.
\end{itemize}

In total, H1, H2, H3 amount to some regularity and decay requirements on V.
At some points in our analysis, we  need the boundedness of $\Omega$ in high-order regular Sobolev spaces.  In those cases, we assume:

\begin{itemize}
  \item $\mathrm{H3}^*$. $W^{1,p}$ boundedness of the wave operator $\Omega $.
\end{itemize}
For a more specific discussion of hypotheses  H1 and  H2, see remark \ref{yre1.1.1} below. For the discussion of hypotheses H3 and $\mathrm{H3}^*$, see Theorem \ref{yth1.1.1}, \ref{yth1.1.2} and \ref{yth1.1.3}.

\subsection{Distorted Fourier transform}

 For each $\xi \in \mathbb{R}^{d} \backslash\{0\},$ we know that for $V$ of class SR, $|\xi|^{2}$ is in the continuous spectrum of $H$, the associated eigenfunction is the distorted plane wave $e(\cdot ; \xi)$ defined as the solution of
\begin{equation}\label{ykl3_1}
H e(\cdot ; \xi)=|\xi|^{2} e(\cdot ; \xi)
\end{equation}
with the asymptotic condition
$$
v(x ; \xi):=e(x ; \xi)-\mathrm{e}^{\mathrm{i} x \cdot \xi}=O\left(|x|^{-1}\right) \quad \text { as }|x| \rightarrow \infty
$$
and the Sommerfeld radiation condition
$$
\lim _{r \rightarrow \infty} r\left(\partial_{r}-\mathrm{i}|\xi|\right) v=0.
$$
The eigenfunction problem \eqref{ykl3_1} can be recast as the Lippman-Schwinger equation:
\begin{equation}\label{ykl3_2}
e(\cdot ; \xi)=e_{\xi}-R_{V}^{-}\left(|\xi|^{2}\right) V e_{\xi}, \quad e_{\xi}(x):=\mathrm{e}^{\mathrm{i} x \cdot \xi},
\end{equation}
where $R_{V}^{-}(z):=\lim _{\epsilon \rightarrow 0+} R_{V}(z-\mathrm{i} \epsilon)=\lim _{\epsilon \rightarrow 0+}(H-(z-i\epsilon))^{-1}$.
It can be shown that there exists a unique solution to \eqref{ykl3_2} for any $\xi \in \mathbb{R}^{d} \backslash\{0\}$ provided that $V=O\left(|x|^{-1-\epsilon}\right)$ as $|x| \rightarrow \infty$, for some $\epsilon>0$ (cf. \cite{AS1}). Under this assumption, the distorted plane waves are relatively smooth in $x$, but have very little regularity in $\xi$. More precisely, for fixed $\xi \in \mathbb{R}^{d} \backslash\{0\}$,
\begin{equation}\label{ykl3_3}
e(\cdot ; \xi) \in\langle x\rangle^{s} H_{x}^{2}, \quad \text { for any } s>(d+1) / 2,
\end{equation}
however, the map $(x, \xi) \mapsto e(x ; \xi)$ is merely measurable. One can improve this by requiring additional decay and regularity of $V$ (cf., e.g., \cite{IT1}).

In view of the Fourier transform, we expect that the family $\{e(\cdot ; \xi)\}$ forms a basis for the absolutely continuous subspace of $H$. This is indeed true, as was first proved by Ikebe \cite{IT1} and later generalized by several authors. For consistency of presentation, we give here the version due to Agmon (cf. \cite{AS1}, Theorem 6.2). Before that, let us now impose assumption H2, namely that $H$ has no discrete spectrum. However, we remark that many results in this paper can be directly generalized to potentials with discrete eigenvalues by simply projecting on the absolutely continuous subspace $L_{\mathrm{ac}}^{2}$ throughout. That said, the result is the following.

\begin{theorem}[Tkebe, Alsholm-Schmidt, Agmon\cite{AS1,IT1}]\label{ykl_th1}
 Consider the Schr\"{o}dinger operator $H$ with potential $V$ satisfying $\mathrm{H} 2$ and
\begin{equation}\label{ykl3_4}
(1+|x|)^{2(1+\epsilon)} \int_{B_{1}(x)}|V(y)|^{2}|y-x|^{-d+\theta} \mathrm{d} y \in L_{x}^{\infty}\left(\mathbb{R}^{d}\right) \quad \text { for some } \epsilon>0,0<\theta<4.
\end{equation}
Define the distorted Fourier transform $\mathcal{F}^{\sharp}$ by
\begin{equation}\label{ykl3_5}
\left(\mathcal{F}^{\sharp} f\right)(\xi):=f^{\sharp} (\xi):=\frac{1}{(2 \pi)^{d / 2}} \lim _{R \rightarrow \infty} \int_{B_{R}} \overline{e(x ; \xi)} f(x) d x,
\end{equation}
where $B_{R}$ is the ball of radius $R$ centered at the origin in $\mathbb{R}^{d}$. Then, $\mathcal{F}^{\sharp}$ is an isometric isomorphism on $L^{2}\left(\mathbb{R}^{d}\right)$ with inverse formula
\begin{equation}\label{ykl3_6}
f(x)=\left(\mathcal{F}^{\sharp^{-1}} f^{\sharp}\right)(x):=\frac{1}{(2 \pi)^{d / 2}} \lim _{R \rightarrow \infty} \int_{B_{R}} e(x ; \xi) f^{\sharp}(\xi) \mathrm{d} \xi,
\end{equation}
Moreover, $\mathcal{F}^{\sharp}$ diagonalizes $H$ in the sense that, for all $f \in H^{2}\left(\mathbb{R}^{d}\right)$,
\begin{equation}\label{ykl3_7}
H f=\mathcal{F}^{\sharp^{-1}} M \mathcal{F}^{\sharp} f,
\end{equation}
where $M$ is the multiplication operator $u \mapsto|x|^{2} u$.
\end{theorem}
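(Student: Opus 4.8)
Since this is a classical result of stationary scattering theory, the plan is to follow the limiting-absorption-principle route rather than to manipulate the distorted plane waves pointwise (one cannot do the latter, since $(x,\xi)\mapsto e(x;\xi)$ is only measurable). First I would construct $e(\cdot;\xi)$ rigorously: under the decay condition \eqref{ykl3_4}, Agmon's weighted resolvent estimates give that $R_V(z)$ maps $L^2_s\to L^2_{-s}$ for $s>1/2$ with bounds uniform as $z\to\lambda\pm i0$ for $\lambda$ in compact subsets of $(0,\infty)$, so the boundary values $R_V^\pm(\lambda)$ exist in this weighted sense; a Fredholm argument, using H2 (no eigenvalues — in particular no positive eigenvalues, which also follows from the decay via Kato's theorem and unique continuation) together with the absence of a threshold resonance, shows that $I+R_0^-(|\xi|^2)V$ (the free resolvent version of \eqref{ykl3_2}) is invertible on the relevant weighted space, so \eqref{ykl3_2} has the unique solution $e(\cdot;\xi)=e_\xi-R_V^-(|\xi|^2)Ve_\xi$, and \eqref{ykl3_3} follows from elliptic regularity.

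Next comes the isometry. For $f$ in a dense class (say $C_c^\infty$), Stone's formula expresses the spectral measure of $H$ through the jump of the resolvent,
\[
\langle E'(\lambda)f,f\rangle=\frac{1}{2\pi i}\big\langle\big(R_V^+(\lambda)-R_V^-(\lambda)\big)f,f\big\rangle .
\]
Inserting \eqref{ykl3_2} and the resolvent identity reduces the right-hand side to the jump of the free resolvent, which is the classical sphere-trace (restriction) operator; this gives $\langle E'(\lambda)f,f\rangle=\int_{|\xi|^2=\lambda}|f^\sharp(\xi)|^2\,d\sigma$. Integrating in $\lambda$ over $[0,\infty)$ and using H2 (so that $E_{\mathrm{ac}}(H)=I$ and the spectrum is purely $[0,\infty)$) yields $\|f\|_{L^2}^2=\|f^\sharp\|_{L^2}^2$; this simultaneously shows the limit in \eqref{ykl3_5} converges in $L^2$, so $\mathcal{F}^{\sharp}$ extends to an isometry on all of $L^2(\R^d)$.

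Then surjectivity and the inversion formula: the operator defined by \eqref{ykl3_6} is the formal adjoint $(\mathcal{F}^{\sharp})^\ast$, and the same Stone-formula computation read the other way gives $(\mathcal{F}^{\sharp})^\ast\mathcal{F}^{\sharp}=I$; hence $\mathcal{F}^{\sharp}$ has dense range, and an isometry with dense range is unitary, so \eqref{ykl3_6} really is $(\mathcal{F}^{\sharp})^{-1}$. Finally the diagonalization \eqref{ykl3_7}: for $f\in H^2(\R^d)$ write $(Hf)^\sharp(\xi)$ from \eqref{ykl3_5}, move $H=-\Delta+V$ onto $e(\cdot;\xi)$ with two integrations by parts, control the boundary terms on $\partial B_R$ using the decay \eqref{ykl3_3} of $e(\cdot;\xi)$ and the decay of $f$, and use \eqref{ykl3_1} to get $(Hf)^\sharp(\xi)=|\xi|^2 f^\sharp(\xi)$, i.e. $\mathcal{F}^{\sharp} H=M\mathcal{F}^{\sharp}$; equivalently, one checks $\mathcal{F}^{\sharp} e^{-itH}=e^{-it|\xi|^2}\mathcal{F}^{\sharp}$ by differentiating in $t$.

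The main obstacle lies entirely in the first two steps: establishing the limiting absorption principle uniformly up to the threshold $\lambda=0$ and at spatial infinity, and the invertibility of the Lippmann–Schwinger operator. This is where hypothesis \eqref{ykl3_4} (together with H2) is used in full force — via Agmon's compactness argument for the perturbed resolvent and the exclusion of positive eigenvalues and zero-energy obstructions — and it is also the point at which one must be most careful about interchanging the several limits involved (the $B_R$ truncation, the $\eps\to0^+$ limit defining $R_V^\pm$, and the $\lambda$-integration), all of which have to be justified in the weighted-$L^2$ topology because of the poor $\xi$-dependence of $e(x;\xi)$.
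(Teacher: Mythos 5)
The paper does not prove Theorem~\ref{ykl_th1}; it is quoted directly from Agmon \cite{AS1} (Theorem~6.2) and Ikebe \cite{IT1}, with no argument given. Your sketch reproduces the standard stationary-scattering proof from those references (limiting absorption principle and a Fredholm alternative for the Lippmann--Schwinger equation, Stone's formula and the resolvent jump for the isometry, the adjoint/dense-range argument for the inversion formula, and integration by parts for the diagonalization), so it is consistent with the cited source; the only small caveat is that the threshold-resonance condition you invoke is relevant at $\lambda=0$ but plays no role in the Fredholm invertibility of $I+R_0^-(|\xi|^2)V$ for $\xi\neq 0$, where absence of embedded positive eigenvalues (together with the LAP compactness) already suffices.
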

\begin{remark}\label{yre1.1.2}
We are now able to give a precise meaning to assumption $\mathrm{H} 1$: $\mathrm{H} 1$ is satisfied provided that
\begin{enumerate}
  \item The family of eigenfunctions $\{e(\cdot, \xi)\}$ exists with the regularity stated in \eqref{ykl3_3};
  \item The operator $\mathcal{F}^{\sharp}$ defined by \eqref{ykl3_5} exists and exhibits the properties described in Theorem \ref{ykl_th1}.
\end{enumerate}
\end{remark}

Once we have defined the distorted Fourier transform, for any function $m: \mathbb{R}^d\rightarrow \mathbb{C}$, we define the distorted Fourier multiplier $m(D^{\sharp})$ to be the operator,
\begin{equation}\label{eq1.11}
 m(D^{\sharp}):= \mathcal{F}^{\sharp^{-1}}m(\xi)\mathcal{F}^{\sharp},
\end{equation}
this is an analog of the well-studied Fourier multiplier $m(\nabla)$ given by $m(\nabla):= \mathcal{F}^{-1}m(\xi)\mathcal{F}$. For us, the importance of $\Omega$ lies in the intertwining relations
\begin{equation}\label{eq1.12}
 \mathrm{e}^{\mathrm{i} t H}=\Omega \mathrm{e}^{\mathrm{i} t H_{0}} \Omega^{*}, \quad \mathcal{F}^{\sharp} \Omega=\mathcal{F}, \quad m(D^{\sharp})= \Omega m(\nabla) \Omega^{*},
\end{equation}
In other words, $\Omega$ allows us to translate back and forth between the flat and distorted
cases. Clearly, then, information about the structure and boundedness properties of $\Omega$
is extremely valuable. We collect some results about the boundedness properties of $\Omega$  below for different dimensions. 

\begin{theorem}[\cite{FK1,Yajima1,Yajima2,Yajima3,JY1,Weder}] \label{yth1.1.1}\mbox{}
\begin{enumerate}
  \item  $d \geq 3 .$ Yajima, Finco-Yajima \cite{FK1,Yajima1,Yajima2,Yajima3}. Let $k \in \mathbb{N}$ and consider the Schr\"{o}dinger operator $H$ with real potential $V: \mathbb{R}^{d} \rightarrow \mathbb{R}$ for $d \geq 3 .$ Fix $p_{0}, k_{0}$ as follows:
$$
\left\{\begin{array}{ll}
p_{0}=2, k_{0}=0 & \text { if } d=3 \\
p_{0}>d / 2, k_{0}:=\lfloor(d-1) / 2\rfloor & \text { if } d \geq 4
\end{array}\right.
$$
Assume that for some $\delta>(3 d / 2)+1$,
\begin{equation}\label{ykl3_eq8}
\langle x\rangle^{\delta}\left\|\partial^{\alpha} V\right\|_{L_{y}^{p_{0}}(|x-y| \leq 1)} \in L_{x}^{\infty}\left(\mathbb{R}^{d}\right), \quad \text{for all $\alpha$ with $|\alpha| \leq k+k_{0}$}.
\end{equation}
Then, $V$ is of class SR and so $\Omega$ and $\Omega^{*}$ are well-defined as operators on $L^{2}\left(\mathbb{R}^{d}\right) \cap$ $W^{k, p}\left(\mathbb{R}^{d}\right)$. If we additionally assume that $V$ is of Generic-type:
\begin{equation}\label{ykl3_eq9}
\text{ there is no $u \in\langle x\rangle^{\theta} L_{x}^{2}\left(\mathbb{R}^{d}\right)$ solving $H u=0, \quad$ for any $\theta>\frac{1}{2}$.}
\end{equation}
Then, $\Omega$ and $\Omega^{*}$ may be extended to bounded operators defined on $W^{k, p}\left(\mathbb{R}^{d}\right)$.
  \item $d=2$. Jensen, Yajima \cite{JY1}. Suppose that $V(x)$ is real-valued and $|V(x)| \leq C\langle x\rangle^{-\delta}, x \in \mathbf{R}^{2}$, for some $\delta>6$, and 0 is neither an eigenvalue nor a resonance of $H$, viz. there are no solutions $u \in H_{\mathrm{loc}}^{2}\left(\mathbf{R}^{2}\right) \backslash\{0\}$ of $-\Delta u+V u=0$, which for some $\alpha$, $b_{1}$, and $b_{2}$ satisfy for $|\alpha| \leq 1$,
$$
\partial_{x}^{\alpha}\left(u-a-\frac{b_{1} x_{1}+b_{2} x_{2}}{|x|^{2}}\right)=O\left(|x|^{-1-\varepsilon-|\alpha|}\right), \quad|x| \rightarrow \infty
$$
Then the wave operators $\Omega$ are bounded in $L^{p}\left(\mathbf{R}^{2}\right)$ for all $p, 1<p<\infty$. moreover, the boundedness of wave operator $\Omega$ in the sobolev space $W^{k,p}(\mathbb{R}^2)$ can be obtained by applying the commutator method for any $1<p<\infty$, $k=0,\cdots,l$,  if $V$ satisfies $|D^{\alpha}V(x)|\leq C_{\alpha} \langle x\rangle^{-\delta} $ for $|\alpha|\leq l$ and 0 is neither an eigenvalue nor a resonance of $H$.
  \item $d=1$, Weder \cite{Weder}. Let $f_{j}(x, k), j=1,2, \Im k \geq 0$, be the Jost solutions to the following equation
\begin{equation}\label{y1.1111}
-\frac{d^{2}}{d x^{2}} u+V u=k^{2} u, k \in \mathbf{C}
\end{equation}
let $[u, v]$ denote the Wronskian of $u$ and $v$ :
$
[u, v]:=\left(\frac{d}{d x} u\right) v-u \frac{d}{d x} v.
$
 A potential $V$ is said to be generic if $\left[f_{1}(x, 0), f_{2}(x, 0)\right] \neq 0$ and $V$ is said to be exceptional if $\left[f_{1}(x, 0), f_{2}(x, 0)\right]=0$. If $V$ is exceptional there is a bounded solution (a half-bound state, or a zero energy resonance) to \eqref{y1.1111} with $k=0$. For $l=0,1, \cdots$, we denote $V^{(l)}:=\frac{d^{l}}{d x^{l}} V(x)$. Note that $V^{(0)}=V$.  Suppose that $V \in L_{\gamma}^{1}$ with $\|V\|_{L_{\gamma}^{1}}:=\int|V(x)|(1+|x|)^{\gamma} d x$, where in the generic case $\gamma>3 / 2$ and in the exceptional case $\gamma>5 / 2$, and that for some $k=1,2, \cdots, V^{(l)} \in L^{1}$, for $l=$ $0,1,2, \cdots, k-1 .$ Then $\Omega$ and $\Omega^{*}$ originally defined on $W^{k, p} \cap L^{2}, 1 \leq p \leq \infty$ have extensions to bounded operators on $W^{k, p}, 1<p<\infty .$ Moreover, there are constants $C_{p}, 1<p<\infty$, such that:
\begin{equation}\label{y1.62}
\left\|\Omega f\right\|_{k, p} \leq C_{p}\|f\|_{k, p} ,\quad \left\|\Omega^{*} f\right\|_{k, p} \leq C_{p}\|f\|_{k, p}, 
\end{equation} $f \in W^{k, p} \cap L^{2}, 1<p<\infty$.
Furthermore, if $V$ is exceptional and $a:=\lim _{x \rightarrow-\infty} f_{1}(x, 0)=1, \Omega$ and $\Omega^{*}$ have extensions to bounded operators on $W^{k, 1}$ and to bounded operators on $W^{k, \infty}$, and there are constants $C_{1}$ and $C_{\infty}$ such that \eqref{y1.62} holds for $p=1$ and $p=\infty$.
\end{enumerate}

\end{theorem}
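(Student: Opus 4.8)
The statements for $d\ge 3$ (Yajima, Finco–Yajima), for $d=1$ (Weder), and the $L^{p}(\mathbb R^{2})$-boundedness for $d=2$ (Jensen–Yajima) are taken verbatim from the cited references, so nothing is to be proved there; the plan is only to indicate the argument behind the last assertion of item (2), the $W^{k,p}(\mathbb R^{2})$-boundedness of $\Omega$ — and, by the same argument, of $\Omega^{*}$ — for $k=0,\dots,l$ and $1<p<\infty$. The whole scheme rests on exactly two ingredients that are already in hand: the $L^{p}$-boundedness of $\Omega$ and $\Omega^{*}$ (the Jensen–Yajima part just recalled, which also gives $\Omega^{*}$ by duality), and the exact intertwining relation $H\Omega=\Omega H_{0}$ from \eqref{eq1.12}.

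First I would rewrite the intertwining relation, setting $A:=-\Delta$ and writing $M_{W}$ for multiplication by a function $W$, as $A\Omega=\Omega A-M_{V}\Omega$ and, dually, $A\Omega^{*}=\Omega^{*}A+\Omega^{*}M_{V}$. Because the hypothesis $|D^{\alpha}V|\le C_{\alpha}\langle x\rangle^{-\delta}$ in particular gives $V\in L^{\infty}$, the right-hand sides of these identities are bounded in $L^{p}$ by a constant times $\|f\|_{W^{2,p}}$ once the $L^{p}$-boundedness of $\Omega,\Omega^{*}$ is granted; hence $\Omega$ and $\Omega^{*}$ are bounded on $W^{2,p}$, and interpolating with the $L^{p}$-bound via the complex interpolation identity $[L^{p},W^{2,p}]_{1/2}=W^{1,p}$ (valid for $1<p<\infty$) yields the $W^{1,p}$-bound. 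Then I would iterate: expanding $A^{m}\Omega$ by repeated use of $A\Omega=\Omega A-M_{V}\Omega$ together with the Leibniz identity $A(M_{W}u)=M_{W}Au-M_{\Delta W}u-2\sum_{j}M_{\partial_{j}W}\partial_{j}u$ writes $A^{m}\Omega$ as $\Omega A^{m}$ plus a finite sum of terms of the schematic form $M_{\partial^{\beta}V}\,\partial^{\gamma}\Omega\,\partial^{\delta}$ with $|\gamma|+|\delta|<2m$ and $|\beta|\le 2m-2$. Under the standing hypothesis every $\partial^{\beta}V$ that occurs lies in $L^{\infty}$, so each such term is bounded on $L^{p}$ by the lower-order $W^{j,p}$-bounds for $\Omega$ already established; this gives the $W^{2m,p}$-bound for $2m\le l$, and the remaining odd orders follow by interpolation, $[W^{2m-2,p},W^{2m,p}]_{1/2}=W^{2m-1,p}$. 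Running this induction up to order $l$ proves the assertion for $\Omega$, and the identical argument starting from $A\Omega^{*}=\Omega^{*}A+\Omega^{*}M_{V}$ proves it for $\Omega^{*}$.

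I do not expect a real obstacle here, since the whole scheme follows mechanically once the two ingredients above are granted; the only thing that needs care is the bookkeeping. Specifically, one must check that the iterated commutation never calls for a derivative $\partial^{\beta}V$ of order exceeding what the hypothesis supplies — it does not, as $|\beta|\le 2m-2\le l$ along the way (so the bound $|\alpha|\le l$ in the statement is comfortably safe, with a little room to spare) — and that the operator identities above, valid a priori only on $\mathcal{S}(\mathbb R^{2})$ or on the space $L^{2}\cap W^{k,p}$ on which $\Omega$ is initially defined, extend by density to all of $W^{k,p}$; both verifications are routine.
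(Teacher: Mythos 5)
The paper does not prove this theorem: it is quoted as a compilation of results from the cited references (Yajima, Finco--Yajima, Jensen--Yajima, Weder), so there is no internal argument to compare against. You correctly observe that the only clause not literally taken from those sources is the sentence in item (2) asserting $W^{k,p}(\mathbb{R}^{2})$-boundedness of $\Omega$ ``by applying the commutator method,'' and the paper states that too without supplying an argument. Your reconstruction --- commuting $A=-\Delta$ past $\Omega$ via $H\Omega=\Omega H_{0}$, collecting lower-order terms weighted by derivatives of $V$, and using complex interpolation of Sobolev spaces to reach odd orders --- is a reasonable and, in substance, correct guess at what that phrase means.

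One point in the bookkeeping, which you flag as ``the only thing that needs care,'' does deserve to be spelled out because your phrasing hides a potential circularity. You describe the induction as: first obtain the $W^{2m,p}$-bound for all $2m\le l$ by the commutator expansion, and then fill in the odd orders by interpolation. But already at $m=2$ the expansion of $A^{2}\Omega$ produces a term $M_{\partial_{j}V}\,\partial_{j}\Omega$, so bounding it calls for the $W^{1,p}$-bound --- an odd order --- before the $W^{4,p}$-bound can be closed. Your stated bound $|\gamma|+|\delta|<2m$ by itself permits $|\gamma|$ as large as $2m-1$, and if $|\gamma|$ could really reach $2m-1$ the scheme would be genuinely circular, since $W^{2m-1,p}$ is obtained by interpolating against $W^{2m,p}$. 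The fix is twofold: interleave the steps ($L^{p}\to W^{2,p}\to W^{1,p}\to W^{4,p}\to W^{3,p}\to\cdots$), and observe that the iterated commutation actually puts at most one extra derivative in front of $\Omega$ per application of $A$, so $|\gamma|\le m-1\le 2(m-1)$, which lies in the range already established when you reach $W^{2m,p}$. With that sharper count recorded (rather than the weaker $|\gamma|+|\delta|<2m$), the induction closes and the sketch is sound; the remaining density and extension remarks you make are routine, as you say.
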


\subsection{The main results}
We start by considering the following  multilinear distorted  multiplier of the form:
\begin{equation}\label{eqy1.11}
\begin{split}
  T(f_1,f_2,\ldots,f_k)(x):=&\int_{\mathbb{R}^{d}} \ldots\int_{\mathbb{R}^{d}} m\left(\xi_{1}, \xi_{2},\ldots,\xi_k\right) f_1^{\sharp}\left(\xi_{1}\right) f_2^{\sharp}\left(\xi_{2}\right)\ldots f^{\sharp}_k\left(\xi_{k}\right)\\
 &\qquad e\left(x, \xi_{1}\right) e\left(x, \xi_{2}\right)\ldots e\left(x, \xi_{k}\right) \mathrm{d} \xi_{1} \mathrm{d} \xi_{2} \ldots \mathrm{d} \xi_{k}.
\end{split}
\end{equation}

When $e(x,\xi_j)=e^{ix\xi_j}$, $f_j^{\sharp}=\hat{f}_j$, The multilinear distorted multiplier defined above becomes the Coifman-Meyer multilinear multiplier. Note that the case $m=1$ corresponds (up to a constant factor) to the product of $f_1,\ldots,f_k$. We say that the multiplier $m$ satisfies Coifman-Meyer type bounds if the following
homogeneous bounds hold for sufficiently many multi-indices $\alpha_1$, $\alpha_2, \ldots,\alpha_k$:
\begin{equation}\label{1.3.1}
\left|\partial_{\xi_{1}}^{\alpha_1} \partial_{\xi_{2}}^{\alpha_2} \ldots \partial_{\xi_{k}}^{\alpha_k} m\left(\xi_{1}, \xi_{2},\ldots,\xi_k\right)\right| \leq C\left(\left|\xi_{1}\right|+\left|\xi_{2}\right| +\ldots+\left|\xi_{k}\right|\right)^{-(|\alpha_1|+|\alpha_2|+\ldots+|\alpha_k|)}
\end{equation}

Our result is the distorted analog of the multilinear Coifman-Meyer multiplier operator theorem in \cite{CM1}, but with a little integrability index destruction when we have no assumption on the $L^p$ boundedness of the Riesz transform \(\Re=\nabla(-\Delta+V)^{-1 / 2}\).
\begin{theorem}\label{mthm1.1'}
	For $d \geq 1$, let $V \in L^{p_V}\left(\mathbb{R}^{d}\right)$ be a potential satisfying \(\mathrm{H} 1, \mathrm{H} 2,\) and \(\mathrm{H} 3 \) for $p_V:=\frac{d}{s_1+s_2}$ with $s_1,s_2$ defined in \eqref{req1.3.5'} below.  Suppose that \(m\left(\xi_{1}, \xi_{2},\ldots,\xi_k\right)\) is a Coifman-Meyer symbol in $k$ variables as in \eqref{1.3.1}, then
\begin{enumerate}
  \item for \( p_j, r^{\prime}\in(1, \infty), j=1,\ldots,k\) satisfy $\frac{1}{r^{\prime}}=\sum^k_{j=1}\frac{1}{p_j}$,
\begin{equation}\label{ykl1.1'}
 \|T(f_1,\ldots,f_k)\|_{L^{r^\prime}\left(\mathbb{R}^{d}\right)} \lesssim_{m, V}\Pi_{j=1}^{k}\|f_j\|_{L^{p_j}\left(\mathbb{R}^{d}\right)}.
\end{equation}
provided that the Riesz transform \(\Re=\nabla(-\Delta+V)^{-1 / 2}\) is bounded on $L^{p_j}, j=1,\ldots,k$ and $L^{r^\prime}$.
  \item suppose instead that $V$ satisfies \(\mathrm{H} 3^{*} \),  for \( p_j, r^{\prime},  \tilde{p}_j\in(1, \infty), j=1,\ldots,k\) satisfying \(\frac{1}{r^{\prime}}=\sum^k_{j=1}\frac{1}{p_j}=\sum^k_{j=1}\frac{1}{\tilde{p}_j} -\frac{\epsilon}{d}\) for some \(\epsilon>0\),
\begin{equation}\label{ykl1.1}
 \|T(f_1,\ldots,f_k)\|_{L^{r^\prime}\left(\mathbb{R}^{d}\right)} \lesssim_{m, V}\Pi_{j=1}^{k}\|f_j\|_{L^{p_j}\left(\mathbb{R}^{d}\right)}+
      \Pi_{j=1}^{k}\|f_j\|_{L^{\tilde{p}_j}\left(\mathbb{R}^{d}\right)}.
\end{equation}
\end{enumerate}
\end{theorem}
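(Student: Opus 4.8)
The plan is to pull the distorted estimate back to the classical Coifman--Meyer theorem through the intertwining identities \eqref{eq1.12}, one input at a time, and to absorb the non--translation--invariance of the distorted plane waves into a distorted Littlewood--Paley calculus. First I would decompose the symbol: viewed as a function of the single variable $(\xi_1,\dots,\xi_k)\in\R^{kd}$, the hypothesis \eqref{1.3.1} says precisely that $m$ is a Mikhlin--H\"ormander symbol on $\R^{kd}$, its only singularity being at the origin. Write $m=\sum_{n\in\Z}m_n$ with $m_n$ supported where $|\xi_1|+\dots+|\xi_k|\sim2^n$; after a finite splitting one may assume that on $\operatorname{supp}m_n$ exactly one frequency, say $|\xi_{j_0}|$, is $\sim 2^n$ while the rest are $\lesssim2^n$. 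Rescaling $m_n$ to a fixed annulus and expanding in a Fourier series gives
\[
m_n(\xi_1,\dots,\xi_k)=\sum_{\nu_1,\dots,\nu_k\in\Z^d}c^{\,n}_{\vec\nu}\prod_{j=1}^{k}\chi_j\!\big(2^{-n}\xi_j\big)\,e^{\,i2^{-n}\nu_j\cdot\xi_j},\qquad |c^{\,n}_{\vec\nu}|\lesssim_N(1+|\vec\nu|)^{-N},
\]
uniformly in $n$, with $\chi_{j_0}$ an annular bump and the remaining $\chi_j$ bumps at the origin.

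I would then transfer to the flat side. Since $e(\cdot;\xi)=\Omega e_\xi$, $f_j^\sharp=\mathcal F\big(\Omega^*f_j\big)$ and $m(D^\sharp)=\Omega\,m(\nabla)\,\Omega^*$ by \eqref{eq1.12}, and since translation is a flat Fourier multiplier, inserting the expansion above into \eqref{eqy1.11} collapses each elementary piece of $T$ to a product: writing $g_j:=\Omega^*f_j$ and letting $\tau_a$ be translation by $a$,
\begin{align*}
T(f_1,\dots,f_k)&=c_d\sum_n\sum_{\vec\nu}c^{\,n}_{\vec\nu}\prod_{j=1}^{k}\Omega\big[\tau_{2^{-n}\nu_j}\,\chi_j(2^{-n}\nabla)\,g_j\big]\\
&=\Big[\Omega^{\otimes k}\big(m(\nabla_1,\dots,\nabla_k)(g_1\otimes\cdots\otimes g_k)\big)\Big]\Big|_{x_1=\cdots=x_k},
\end{align*}
a flat Coifman--Meyer object in $k$ separate variables, with a wave operator applied in each variable, then restricted to the diagonal. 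By H3, $\|g_j\|_{L^{p_j}}\lesssim\|f_j\|_{L^{p_j}}$.

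The heart is a distorted Littlewood--Paley toolkit, run as in the classical proof: split by which input carries the top frequency, put a square function on that input and a maximal function on the rest, use Cauchy--Schwarz in $n$ and H\"older in $x$, and sum the rapidly decaying coefficients. The distorted square function and its shifted variants, say
\[
\Big\|\big(\textstyle\sum_n|\Omega\,\tau_{2^{-n}\nu_{j_0}}\chi_{j_0}(2^{-n}\nabla)g_{j_0}|^2\big)^{1/2}\Big\|_{L^{p_{j_0}}}\lesssim(1+|\nu_{j_0}|)^{N}\|g_{j_0}\|_{L^{p_{j_0}}},
\]
follow from H3 alone: conjugate by $\Omega$, bound the $\Omega$--factor on $L^{p_{j_0}}$, and recognise the Rademacher--randomised $n$--sum as a Mikhlin multiplier of constant $O((1+|\nu_{j_0}|)^{N})$. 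The point where more is needed is the distorted maximal estimate for the low inputs and the reassembly of $\sum_n$ in $L^{r'}$: here the genuine obstruction appears, because the distorted plane waves are not characters, $\prod_j e(x;\xi_j)\ne e(x;\xi_1+\cdots+\xi_k)$, and the output--frequency localisation that drives these steps in the flat case is lost.

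The hard part is therefore how to recover that localisation. When the Riesz transform $\Re=\nabla(-\Delta+V)^{-1/2}$ is bounded on the exponents in play, one has the full $H$--adapted Calder\'on--Zygmund and maximal--function theory at hand, the toolkit goes through unchanged, and one gets part (1), i.e. \eqref{ykl1.1'}, with no loss. Without it I would trade integrability for regularity: interpolating H3 with H3$^{*}$ bounds $\Omega$, hence $\Omega^{\otimes k}$, on the fractional Sobolev spaces $W^{\epsilon,\tilde p_j}$, and Sobolev embedding upgrades the available $L^{\tilde p_j}$ bounds to the $L^{p_j}$ bounds the argument demands, the total cost being the gap recorded in the relation $\frac1{r'}=\sum_j\frac1{p_j}=\sum_j\frac1{\tilde p_j}-\frac\epsilon d$; pushing $\epsilon$ derivatives through then produces the extra term $\prod_j\|f_j\|_{L^{\tilde p_j}}$ in \eqref{ykl1.1}. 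In short, the symbol decomposition and the input--by--input transfer are routine; the obstacle is the loss of output localisation caused by $\{e(\cdot;\xi)\}$ not being a character group, and the two hypotheses are two ways of paying for it.
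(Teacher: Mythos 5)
Your setup — a dyadic decomposition of the symbol followed by a Fourier series expansion with rapidly decaying coefficients, the intertwining relations \eqref{eq1.12} to pull operators through $\Omega$, and square/maximal function estimates — does overlap with the paper's Step 1 and with Lemma \ref{re1.6}, and is a sound start. But the genuine gap, which you yourself flag and then do not close, is the summation over dyadic scales. Your claim that with the Riesz transform bound ``the toolkit goes through unchanged'' is not correct: the classical Coifman--Meyer paradigm hinges on the fact that a product of ordinary-frequency-localized pieces at scales $\lesssim 2^n$ is again localized at scale $\lesssim 2^n$, so that after pairing with the test function one can Cauchy--Schwarz in $n$ between two honest square functions. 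In your reduction each factor is $\Omega\bigl[\tau_{2^{-n}\nu_j}\chi_j(2^{-n}\nabla)g_j\bigr]$, and $\Omega$ destroys ordinary Fourier localization, while simultaneously the product of \emph{distorted}-frequency-localized pieces is \emph{not} distorted-frequency-localized (the failure of $\mathcal{F}^{\sharp}(fg)=f^{\sharp}*g^{\sharp}$ that the paper itself points out). $L^{p}$-boundedness of $\mathfrak{R}=\nabla(-\Delta+V)^{-1/2}$ by itself does not restore this, and ``the full $H$-adapted Calder\'on--Zygmund theory'' is not something the hypotheses provide. Without a gain in the dyadic parameter your multiple sums log-diverge, and the interpolation/Sobolev hand-wave for part (2) has the same problem.

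The paper supplies the missing mechanism, and it is absent from your outline: after dualizing to the $(k{+}1)$-linear form $\Lambda$, it integrates by parts against the eigenfunction equation $He(\cdot;\xi_1)=|\xi_1|^2 e(\cdot;\xi_1)$ inside the nonlinear spectral distribution (Step 2, the Green's formula computation), yielding
\[
|\xi_1|^2 M=\sum_{j\geq 2}|\xi_j|^2 M-(k-1)\!\int V\prod_j e(\cdot;\xi_j)\,dx+2\!\!\sum_{2\leq j<l\leq k+1}\int \nabla e(\cdot;\xi_j)\cdot\nabla e(\cdot;\xi_l)\!\!\prod_{m\neq j,l}\!\! e(\cdot;\xi_m)\,dx.
\]
Localizing the largest distorted frequency to $\sim N_1$ then produces the factors $N_j^2/N_1^2$ and $1/N_1^2$ in the three pieces $I$, $II$, $III$, and it is exactly this gain that lets the sums over $N_1\geq\max\{1,N_2,\dots,N_{k+1}\}$ converge. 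This also explains two features of the theorem your sketch never touches: the hypothesis $V\in L^{p_V}$ with $p_V=d/(s_1+s_2)$ enters \emph{only} because the integration by parts produces the explicit $V$-term $II$, handled via Sobolev embedding with exponents $s_1,s_2$ as in \eqref{req1.3.5'}; and the Riesz transform enters \emph{only} because the gradient cross-term $III$ produces ordinary gradients of distorted-frequency-localized functions, which must be converted back via $\nabla=\mathfrak{R}H^{1/2}$ (or, lacking $\mathfrak{R}$, via $\nabla\langle D^{\sharp}\rangle^{-1}$ at the cost of the $\epsilon$-loss in \eqref{ykl1.1}). Since your argument never reaches a point where either $V$ or $\nabla$ appears explicitly, it cannot be using those hypotheses in the only way they can be used, which is a strong sign the crucial step is missing.
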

\begin{remark}[More discussions of the conditions for theorem \ref{mthm1.1'}]\label{yre1.1.1}\mbox{}
\begin{enumerate}
  \item {\bf{About assumption H1:}} It follows that sufficient conditions for $\mathrm{H} 1$ are that $V$ satisfies $\mathrm{H} 2$, \eqref{ykl3_4}
and $V=O\left(|x|^{-1-\epsilon}\right)$ as $|x| \rightarrow \infty$, for some $\epsilon>0$.
 \item {\bf{About assumption H2:}} Once \eqref{ykl3_4} is satisfied, we rule out the existence of nonnegative eigenvalues. Additionally if the negative part of $V$ is not very large, there are no negative eigenvalues(e.g., if $d \geq 3$, Hardy's inequality implies that the condition $V \geq-(d-2)^{2} / 4|x|^{2}$ is sufficient to rule out both nonpositive eigenvalues and resonances at 0 as defined in \eqref{ykl3_eq9}), then $\mathrm{H} 2$ holds.
  \item {\bf{About Riesz transform:}} (a). If $V$ belongs to the class of $B_{d}$, by theorem 1.2 in \cite{AB1},  the Riesz transform \(\Re=\nabla(-\Delta+V)^{-1 / 2}\) is bounded on $L^{p}, 1<p<\infty$. Here, in \cite{AB1}, $B_{q}, 1<q \leq \infty$, is the class of the reverse H\"{o}lder weights: $w \in B_{q}$ if $w \in L_{l o c}^{q}\left(\mathbb{R}^{d}\right), w>0$ almost everywhere and there exists a constant $C$ such that for all cube $Q$ of $\mathbb{R}^{d}$,
$$
\left(\frac{1}{|Q|} \int_{Q} w^{q}(x) d x\right)^{1 / q} \leq \frac{C}{|Q|} \int_{Q} w(x) d x
$$
If $q=\infty$, then the left hand side is the essential supremum on $Q$.  Examples of $B_{q}$ weights are the power weights $|x|^{-\alpha}$ for $-\infty<\alpha<d / q$ and positive polynomials for $q=\infty$. \\ (b). Let $V$ be a real potential such that
$$
V \in C^{\infty}(\mathbb{R}^{d}), \quad V=O\left(x^{3}\right) \text { as } x \rightarrow 0.
$$ Moreover, assume $\left|V_{-}(z)\right| \leq \alpha|z|^{-2}$ for $\alpha<(d / 2-1)^{2}$ or $V_{-}=0$, where $V_{-}$ is the negative part of $V$, then $H=-\Delta+V$ does not have a zero-resonance nor nontrivial $L^{2}$ kernel, by theorem 1.3 in \cite{CGAH}, for $d\geq 3$, the Riesz transform \(\Re=\nabla(-\Delta+V)^{-1 / 2}\) is bounded on $L^{p}, 1<p<d$.
  \end{enumerate}
\end{remark}

\begin{remark}
When the symbol depends on  the spatial variable $x$, i.e. $m(\xi_1,\xi_2,\cdots,\xi_k)$ becomes $m(x;\xi_1,\xi_2,\cdots,\xi_k)$, we think that following the argument to the proof of theorem \ref{mthm1.1'}, we can get the same result if we add appropriate condition to $m(x;\xi_1,\xi_2,\cdots,\xi_k)$ on the spatial variable $x$ additionally. In particular, when $m(x;\xi_1,\xi_2,\cdots,\xi_k)=a(x)m(\xi_1,\xi_2,\cdots,\xi_k)$, we can get the same conclusion as theorem \ref{mthm1.1'} by just letting $a\in L^{\infty}$ additionally.
\end{remark}
Our results extend Germain, Hani and Walsh's bilinear estimates in \cite{PZS} to the multilinear case and hold for all dimensions $d\geq 1$. We think the assumptions  for the potential in our theorem can be weakened properly, and now the assumptions for the potential are not optimal. Note that our multilinear estimate can not be obtained directly from bilinear estimate by induction, because even in the framework of classical Fourier transform,  it can not be obtained.  In our proof, after the distorted-frequency localization, we do not divide the distorted-frequency region of multiple summations into upper and lower triangular regions roughly according to the symmetry, such as $\Lambda(f_1,\cdots,f_{k+1})=C \sum_{N_1<\cdots<N_{k+1}}\Lambda(f_{1;N_1},\cdots,f_{k+1;N_{k+1}}).$
where $k+1$-linear form $\Lambda(f_1,\cdots,f_k,f_{k+1})$ is defined in \eqref{eqy3.1.1}. While we divide the distorted-frequency region of multiple summations into high distorted-frequency and low distorted-frequency parts,  the low distorted-frequency part is  $$\Lambda_L(f_1,\cdots,f_{k+1}):=\sum\limits_{N_1\leq 1,\cdots,N_{k+1}\leq 1}\Lambda(f_{1;N_1},\cdots,f_{k+1;N_{k+1}})=\Lambda(f_{1;\leq1},\cdots,f_{k+1;\leq1}).$$  For the low distorted-frequency part, we eliminate the multiple summations and estimate directly, and then obtain the multilinear estimate without the destruction of integrability index.  In this way, we will not worry too much about the limitation of index and dimensions of Sobolev's embedding theorem in the subsequent proof. This removes the limitation of dimension; For the high distorted-frequency part, we partly decompose the multiple summations region into the upper and lower triangular regions, and make use of more symmetries and cancellations, so as to cut down the multiplicities of the summations about distorted-frequency and prevent the logarithmic divergence caused by multiple summations. Finally we obtain as desired, see section 3 for details.

Compared with the flat case of multilinear multipliers,  the difficulties we face come from the nonlinear spectral distribution when we try to estimate the  multilinear distorted  multipliers, here the nonlinear spectral distribution is given as follows, $$M(\xi_1,\cdots,\xi_k,\xi_{k+1})=\int_{\mathbb{R}^{d}} e(x,\xi_1)e(x,\xi_2)\cdots e(x,\xi_{k+1})dx.$$
in flat case, $M(\xi_1,\cdots,\xi_k,\xi_{k+1})=\delta(\xi_1+\cdots+\xi_k+\xi_{k+1})$. However, in the distorted case,  $M(\xi_1,\cdots,\xi_k,\xi_{k+1})\neq\delta(\xi_1+\cdots+\xi_k+\xi_{k+1})$,  we don't have convolution structure $\mathcal{F}^{\sharp}(fg)=\int f^{\sharp}(\xi-\eta)g^{\sharp}(\eta)d\eta$ any more. Therefore, we know little about the distorted-frequency support distribution of the multilinear distorted  multipliers, which makes it impossible for us to estimate the fractional derivatives of the  multilinear distorted  multipliers by Bony decomposition. We only obtain the estimates of integer derivatives of the  multilinear distorted  multipliers providing that the Riesz transform \(\Re=\nabla(-\Delta+V)^{-1 / 2}\) is bounded on $L^{p}, 1<p<\infty$, and  we obtain the estimates of even integer derivatives of the  multilinear distorted  multipliers if we do not have the assumption that the Riesz transform \(\Re=\nabla(-\Delta+V)^{-1 / 2}\) is bounded on $L^{p}, 1<p<\infty$.

\begin{theorem}[Leibniz's law of integer order derivations]\label{reyth1.1}
For $s\geq0$ an integer, $V\in W^{s,\infty}(\mathbb{R}^{d})\cap W^{s,d}(\mathbb{R}^{d}), d\geq1$, let $V$ satisfy \(\mathrm{H} 1, \mathrm{H} 2,\) and \(\mathrm{H} 3^{*} \),
\begin{enumerate}
  \item If the Riesz transform \(\Re=\nabla(-\Delta+V)^{-1 / 2}\) is bounded on $L^{p}, 1<p<\infty$. Then for  $\sum_{j=1}^k\frac{1}{p_j}=\frac{1}{r'}=1-\frac{1}{r}$,  we have
\begin{equation}\label{y3.2}
	\begin{split}
\|T(f_1,\cdots,f_k)\|_{\dot{W}^{s,r'}_{\sharp}}\lesssim \sum_{l=1}^{k}\|f_l\|_{\dot{W}^{s,p_l}_{\sharp}}\prod_{j=1,j\neq l}^k\|f_j\|_{L^{p_j}}+ \prod_{j=1}^k\|f_j\|_{L^{p_j}}.
	\end{split}
\end{equation}
and
\begin{equation}\label{y3.3}
	\begin{split}
\|T(f_1,\cdots,f_k)\|_{W^{s,r'}_{\sharp}}\lesssim \sum_{l=1}^{k}\|f_l\|_{W^{s,p_l}_{\sharp}}\prod_{j=1,j\neq l}^k\|f_j\|_{L^{p_j}}.
	\end{split}
\end{equation}
  \item If we do not have the assumption that the Riesz transform \(\Re=\nabla(-\Delta+V)^{-1 / 2}\) is bounded on $L^{p}, 1<p<\infty$.  For $s=2k\geq0$ being an even integer, \( p_j, r^{\prime},  \tilde{p}_j\in(1, \infty), j=1,\ldots,k\) satisfying \(\frac{1}{r^{\prime}}=\sum^k_{j=1}\frac{1}{p_j}=\sum^k_{j=1}\frac{1}{\tilde{p}_j} -\frac{\epsilon}{d}\) for some \(\epsilon>0\), we have
\begin{equation}\label{reyeq1.1}
	\begin{split}
\|T(f_1,\cdots,f_k)\|_{\dot{W}^{s,r'}_{\sharp}}&\lesssim \sum_{l=1}^{k}\|f_l\|_{\dot{W}^{s,p_l}_{\sharp}}\prod_{j=1,j\neq l}^k\|f_j\|_{L^{p_j}}+ \prod_{j=1}^k\|f_j\|_{L^{p_j}}\\
&+\sum_{l=1}^{k}\|f_l\|_{\dot{W}^{s,\tilde{p}_l}_{\sharp}}\prod_{j=1,j\neq l}^k\|f_j\|_{L^{\tilde{p}_j}}+ \prod_{j=1}^k\|f_j\|_{L^{\tilde{p}_j}}.
	\end{split}
\end{equation}
and
\begin{equation}
	\begin{split}
\|T(f_1,\cdots,f_k)\|_{W^{s,r'}_{\sharp}}&\lesssim \sum_{l=1}^{k}\|f_l\|_{W^{s,p_l}_{\sharp}}\prod_{j=1,j\neq l}^k\|f_j\|_{L^{p_j}}\\
&+\sum_{l=1}^{k}\|f_l\|_{W^{s,\tilde{p}_l}_{\sharp}}\prod_{j=1,j\neq l}^k\|f_j\|_{L^{\tilde{p}_j}}.
	\end{split}
\end{equation}
\end{enumerate}
\end{theorem}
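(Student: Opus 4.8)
\emph{Proof strategy.} The case $s=0$ is exactly Theorem~\ref{mthm1.1'}, and I would argue by induction on the order, reducing the $s$‑th order distorted‑derivative estimate for $T$ to finitely many applications of Theorem~\ref{mthm1.1'}. The key point is that the distorted Sobolev norms are driven by genuine differential operators: since $\mathcal{F}^{\sharp}$ diagonalises $H$ (Theorem~\ref{ykl_th1}), one has $|D^{\sharp}|^{s}=(\mathcal{F}^{\sharp})^{-1}|\xi|^{s}\mathcal{F}^{\sharp}$ and hence $|D^{\sharp}|^{2}=H=-\Delta+V$; thus for an even order $s=2n$,
\[
\|g\|_{\dot W^{s,r'}_\sharp}=\|H^{n}g\|_{L^{r'}},\qquad \|g\|_{W^{s,r'}_\sharp}\approx\sum_{0\le m\le n}\|H^{m}g\|_{L^{r'}},
\]
and $H^{n}$ is a differential operator of order $2n$ whose coefficients are universal polynomials in $V,\nabla V,\dots,\nabla^{2n-2}V$. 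Under $V\in W^{s,\infty}\cap W^{s,d}$ each such coefficient lies in $L^{\infty}\cap L^{d}$, so multiplication by it is bounded on every $L^{q}$, $1<q<\infty$, and maps $L^{q}\to L^{q_1}$ with $\tfrac1{q_1}=\tfrac1q+\tfrac1{q_2}$ for any $q_2\in[d,\infty]$; this last freedom is exactly what accommodates the $\tfrac{\epsilon}{d}$ loss of integrability in \eqref{reyeq1.1}. For an \emph{odd} order in part~(1) I would instead write $|D^{\sharp}|^{s}=H^{1/2}H^{(s-1)/2}$ and use the hypothesised $L^{p}$‑boundedness of $\Re=\nabla(-\Delta+V)^{-1/2}$ on the full range $1<p<\infty$, together with its reverse/adjoint inequality obtained by duality, to trade $H^{1/2}$ for $\nabla$ modulo bounded operators; it then suffices to bound $\|\nabla H^{(s-1)/2}T(f_1,\dots,f_k)\|_{L^{r'}}$, i.e. one honest gradient on top of an even‑order differential operator.

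\textbf{Commuting $H^{n}$ through $T$.} Given these reductions I would apply $H^{n}$ (resp.\ $\nabla H^{n}$) to $T(f_1,\dots,f_k)$ by letting it fall, inside \eqref{eqy1.11}, on the product $\prod_{l}e(x,\xi_l)$, and repeatedly invoke the Leibniz rule and the eigenfunction relation $He(\cdot,\xi)=|\xi|^{2}e(\cdot,\xi)$, equivalently $|\xi_i|^{2}f_i^{\sharp}(\xi_i)=(Hf_i)^{\sharp}(\xi_i)$, which is the distorted analogue of $\partial_x e^{ix\xi}=i\xi\,e^{ix\xi}$. A single $-\Delta$ applied to $\prod_l e(x,\xi_l)$ produces a \emph{diagonal} part $\big(\sum_i|\xi_i|^{2}\big)\prod_l e$, which reassembles into $T$ acting on the tuple with some $f_i$ replaced by $Hf_i$ and the \emph{same} symbol $m$ (still Coifman--Meyer, \eqref{1.3.1}); a \emph{cross} part $-2\sum_{i<j}\nabla e(\cdot,\xi_i)\cdot\nabla e(\cdot,\xi_j)\prod_{l\ne i,j}e$; and, after substituting $-\Delta=V-H$, terms carrying explicit factors of $V$. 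For the cross part I would use the elliptic identity $2\,\nabla e(\cdot,\xi_i)\cdot\nabla e(\cdot,\xi_j)=-H\!\big(e(\cdot,\xi_i)e(\cdot,\xi_j)\big)-V\,e(\cdot,\xi_i)e(\cdot,\xi_j)+(|\xi_i|^{2}+|\xi_j|^{2})\,e(\cdot,\xi_i)e(\cdot,\xi_j)$, a direct consequence of $-\Delta e(\cdot,\xi)=(|\xi|^{2}-V)e(\cdot,\xi)$: the diagonal pieces feed back into terms of the first type, the $V$‑piece into $V$‑coefficient terms, and the remaining $H$‑on‑a‑pair piece is handled by one further commutation. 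Iterating $n$ times and organising the multiple summations over distorted frequencies by the high/low‑frequency splitting and triangular decomposition of Section~3 (this is where the output distorted frequency of each piece gets localised and the logarithmic divergences are avoided), one arrives at $H^{n}T(f_1,\dots,f_k)$ written as a finite sum of: (i) terms $T_{m}(H^{n_1}f_1,\dots,H^{n_k}f_k)$ with $n_1+\dots+n_k\le n$ and $m$ Coifman--Meyer; and (ii) products of distorted‑frequency‑localised $f_i$'s against $L^{\infty}\cap L^{d}$ coefficients built from $V$ and its derivatives of order $\le s$.

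\textbf{Collapsing the derivative load.} To each term $T_{m}(H^{n_1}f_1,\dots,H^{n_k}f_k)$ I would apply Theorem~\ref{mthm1.1'}: in part~(1) this gives $\prod_i\|H^{n_i}f_i\|_{L^{p_i}}$, and in part~(2) the weaker $\prod_i\|H^{n_i}f_i\|_{L^{p_i}}+\prod_i\|H^{n_i}f_i\|_{L^{\tilde p_i}}$, precisely the index loss already present in \eqref{ykl1.1}. To pass from $\prod_i\|H^{n_i}f_i\|_{L^{p_i}}$ with $\sum_i n_i=n$ to the Leibniz‑type right‑hand side I would use the distorted Gagliardo--Nirenberg interpolation
\[
\|H^{n_i}f_i\|_{L^{p_i}}\lesssim \|f_i\|_{L^{p_i}}^{1-n_i/n}\,\|H^{n}f_i\|_{L^{p_i}}^{n_i/n},
\]
which follows from the flat inequality applied to $\Omega^{*}f_i$ together with the $L^{p}$‑boundedness of $\Omega,\Omega^{*}$ (a consequence of $\mathrm{H}3^{*}$), and then weighted AM--GM: since $\sum_i n_i/n=1$,
\[
\prod_i\|f_i\|_{L^{p_i}}^{1-n_i/n}\|H^{n}f_i\|_{L^{p_i}}^{n_i/n}\le\sum_i\frac{n_i}{n}\,\|H^{n}f_i\|_{L^{p_i}}\prod_{j\ne i}\|f_j\|_{L^{p_j}}.
\]
Summing the finitely many terms gives \eqref{reyeq1.1} (and \eqref{y3.2} when $\Re$ is available, so that no $\tilde p$‑terms occur and odd orders are also reached via the $\nabla H^{(s-1)/2}$ reduction, the extra gradient producing one more Leibniz term whose diagonal part $i\xi_l e(\cdot,\xi_l)\prod_{j\ne l}e$ becomes, after one use of $\Re$, a single distorted derivative on $f_l$); adding the $L^{r'}$‑bound of Theorem~\ref{mthm1.1'} for the $m=0$ piece and using $\|f_l\|_{W^{s,p_l}_\sharp}\gtrsim\|f_l\|_{L^{p_l}}$ upgrades these to the inhomogeneous \eqref{y3.3} and its part‑(2) analogue.

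\textbf{The main obstacle.} The heart of the matter is the cross terms $\nabla e(\cdot,\xi_i)\cdot\nabla e(\cdot,\xi_j)$ and their iterates: they have no flat counterpart (there $\partial^{\alpha}$ acts diagonally on $e^{ix\cdot(\xi_1+\dots+\xi_k)}$), and taming them forces one to re‑run, at every order, the low/high distorted‑frequency analysis and triangular summation of Section~3 while keeping all symbols within the class \eqref{1.3.1} and tracking which input carries the derivatives. This is the same obstruction as the nonlinear spectral distribution $M(\xi_1,\dots,\xi_{k+1})\ne\delta(\xi_1+\dots+\xi_{k+1})$, i.e.\ the failure of $H$ to act multiplicatively on products; it is also why part~(2) is confined to even orders, an odd order needing the Riesz transform to absorb the stray single gradient that the cross and Leibniz terms leave behind.
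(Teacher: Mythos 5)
Your overall architecture — apply $H^{n}$ to $T$ via the Leibniz rule on $\prod_{l}e(\cdot,\xi_{l})$, exploit $He(\cdot,\xi)=|\xi|^{2}e(\cdot,\xi)$ to reassemble the diagonal pieces into $T(Hf_{1},\ldots,f_{k})+\cdots$, isolate the $V$-coefficient terms, and reduce everything to Theorem~\ref{mthm1.1'} — is the same as the paper's. The paper organises this through the dual pairing $\int H^{s}T\,f_{k+1}\,dx=\int T\,H^{s}f_{k+1}\,dx$ and then applies Green's formula to $|\xi_{k+1}|^{2s}M$, but this is essentially your computation written in dual form. Your Gagliardo--Nirenberg plus weighted AM--GM step for redistributing the derivative load onto a single input is also a reasonable way to make precise the paper's brief reference to ``using interpolation.''

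The genuine gap is in your treatment of the cross terms $\nabla e(\cdot,\xi_{i})\cdot\nabla e(\cdot,\xi_{j})$. Your elliptic identity
\[
2\nabla e_{i}\cdot\nabla e_{j}=-H(e_{i}e_{j})-Ve_{i}e_{j}+\bigl(|\xi_{i}|^{2}+|\xi_{j}|^{2}\bigr)e_{i}e_{j}
\]
is correct, but the proposed ``one further commutation'' on the $H(e_{i}e_{j})$ piece does not close: expanding $H(e_{i}e_{j})=-\Delta(e_{i}e_{j})+Ve_{i}e_{j}$ by the same Leibniz rule regenerates exactly the cross term $-2\nabla e_{i}\cdot\nabla e_{j}$ you started from, plus the diagonal and $V$ pieces, so the substitution is a tautology and does not reduce the order or the number of gradients. (This is the same reason the statement ``whose diagonal part $i\xi_{l}e(\cdot,\xi_{l})\prod_{j\ne l}e$'' is misleading: for distorted eigenfunctions $\nabla e(\cdot,\xi_{l})\ne i\xi_{l}e(\cdot,\xi_{l})$ — that failure \emph{is} the whole obstruction.) What the paper actually does, by invoking the treatment of the term $III$ in Step~2 of the proof of Theorem~\ref{mthm1.1'}, is to estimate the gradient factors \emph{directly}: after localisation, $\nabla f_{j;N_{j}}$ is written as $N_{j}\,\Re\,\tilde f_{j;N_{j}}$ (in part~(1), using the assumed $L^{p}$-boundedness of $\Re=\nabla(-\Delta+V)^{-1/2}$), or as $\langle N_{j}\rangle\,\mathfrak{B}\,\tilde f_{j;N_{j}}$ with $\mathfrak{B}=\nabla\langle D^{\sharp}\rangle^{-1}$ (in part~(2), which costs the $\epsilon/d$ integrability loss). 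This is also precisely why, without Riesz boundedness, odd orders $s$ are excluded: the odd case forces one factor $\nabla(|D^{\sharp}|^{-1}\cdot)=\Re$ onto the test function in the dual pairing, as the paper notes in the remark following the theorem. Your proposal should therefore drop the circular elliptic substitution and instead carry the $\nabla e_{i}\cdot\nabla e_{j}$ terms through the frequency decomposition of Section~3 and estimate them with $\Re$ or $\mathfrak{B}$, exactly as in the proof of Theorem~\ref{mthm1.1'}.

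Two smaller remarks. First, when you iterate $H$ the $V$-coefficients are not static: the later applications of $H$ also hit $V$, producing $\partial^{\alpha}V$ for $|\alpha|\le s$, which is exactly why the hypothesis is $V\in W^{s,\infty}\cap W^{s,d}$; you should track this explicitly rather than assert the coefficients stay in $L^{\infty}\cap L^{d}$. Second, your GN/AM--GM collapse presupposes that the output after iterating $H^{n}$ is a finite sum of terms of the form $T_{m}(H^{n_{1}}f_{1},\ldots,H^{n_{k}}f_{k})$; but the cross terms do \emph{not} reassemble into this form (they leave genuine $\nabla f_{j}\cdot\nabla f_{l}$ structures), so this collapse cannot be invoked until the cross terms have been dispatched by the Riesz-transform argument.
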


\begin{remark}
\begin{enumerate}
  \item The second term on the right hand side of \eqref{y3.2} and \eqref{reyeq1.1} comes from the contribution of the term containing $V(x)$ in \eqref{y3.1} and \eqref{y3.1'}.
  \item Without the assumption that the Riesz transform \(\Re=\nabla(-\Delta+V)^{-1 / 2}\) is bounded on $L^{p}, 1<p<\infty$, we can only obtain the estimates for  the even integer  derivatives of the multilinear distorted multiplier, because we cannot estimate the $L^{r}$ boundedness of the Riesz transform of the test function $\nabla(-\Delta+V)^{-1/2}f_{k+1}$ in the duality formula \eqref{y3.1'} below.
\end{enumerate}
\end{remark}


As another application, we consider the following  generalized mass-critical nonlinear Schr\"{o}dinger equation with good potential in low dimensions $d=1,2$:
\begin{equation}\label{y3.2.1''}
i u_{t}-\Delta u+Vu=a(x)F(u), \quad u(0, x)=u_{0}(x), \quad x\in\mathbb{R}^d.
\end{equation}
when $d=1$, $F(u)=T(\bar{u},\bar{u},u,u,u)(x)$; when $d=2$, $F(u)=T(\bar{u},u,u)(x)$.
Note that the case symbol $m=1$ corresponds (up to a constant factor) to the product of the functions. Therefore, in this case, when $V=0$ and $a(x)\equiv1$, or $a(x)\equiv-1$, the equation \eqref{y3.2.1''} becomes a classical mass-critical nonlinear Schr\"{o}dinger equation in $d=1,2$.

 For good potential $V$: $V$ satisfies \(\mathrm{H} 1, \mathrm{H} 2,\) and \(\mathrm{H} 3 \), and assume that the Riesz transform \(\Re=\nabla(-\Delta+V)^{-1 / 2}\) is bounded on $L^{p}, 1<p<\infty$ when $d=1,2$, we have the scattering of the generalized mass-critical NLS with good potential for small data in low dimensions $d=1,2$.

\begin{theorem}[local wellposedness and  small data scattering]\label{th-y3.3} For $d=1,2$, $a(x)\in L^{\infty}$, the equation \eqref{y3.2.1''} has the following properties:
\begin{enumerate}
  \item (Local wellposedness) For any $u_{0} \in L_{x}^{2}\left(\mathbf{R}^{d}\right)$, there exists $T\left(u_{0}\right)>0$ such that \eqref{y3.2.1''} is locally well posed on $[-T, T].$ The term $T\left(u_{0}\right)$ depends on the profile of the initial data as well as its size. Moreover, \eqref{y3.2.1''} is well posed on an open interval $I \subset \mathbf{R}$, $0 \in I$;
  \item (Small data scattering) there exists $\varepsilon_{0}(d)>0$, such that if
\begin{equation}
\left\|u_{0}\right\|_{L^{2}\left(\mathbf{R}^{d}\right)} \leq \varepsilon_{0}(d),
\end{equation}
then \eqref{y3.2.1''} is globally well posed and scattering, i.e. there exist $u^{\pm} \in L_{x}^{2}(\mathbb{R}^d )$ such that
\begin{equation}
\|u(t)- e^{it\Delta} u^{\pm}\|_{L_{x}^{2}} \to 0, \ \text{ as } t\to \pm \infty.
\end{equation}
\end{enumerate}
\end{theorem}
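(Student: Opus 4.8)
\noindent
The argument is a Strichartz-space contraction run around the propagator $\mathrm{e}^{\mathrm{i}tH}$, $H=-\Delta+V$, with the nonlinearity controlled by Theorem~\ref{mthm1.1'}(1). First I would put \eqref{y3.2.1''} in Duhamel form
\[
u(t)=\mathrm{e}^{\mathrm{i}tH}u_{0}-\mathrm{i}\int_{0}^{t}\mathrm{e}^{\mathrm{i}(t-s)H}\big[a(x)F(u(s))\big]\,\mathrm{d}s
\]
(signs fixed by the convention in \eqref{y3.2.1''}), and record that $\mathrm{e}^{\mathrm{i}tH}$ obeys exactly the Strichartz estimates of the free group: from $\mathrm{e}^{\mathrm{i}tH}=\Omega\,\mathrm{e}^{\mathrm{i}tH_{0}}\,\Omega^{*}$ in \eqref{eq1.12}, the $L^{p}$ boundedness of $\Omega,\Omega^{*}$ granted by $\mathrm{H}3$, and the $L^{2}$-unitarity of $\Omega,\Omega^{*}$, one obtains $\|\mathrm{e}^{\mathrm{i}tH}f\|_{L^{q}_{t}L^{r}_{x}}\lesssim\|f\|_{L^{2}}$ together with the retarded inhomogeneous bound, for every admissible pair $\tfrac{2}{q}+\tfrac{d}{r}=\tfrac{d}{2}$, $2\le q,r\le\infty$. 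I would work in $X(I):=C_{t}(I;L^{2}_{x})\cap L^{q_{0}}_{t}(I;L^{q_{0}}_{x})$ with $q_{0}=6$ for $d=1$ and $q_{0}=4$ for $d=2$; these are the $L^{2}$-critical diagonal exponents $q_{0}=r_{0}=k+1$, where $k=5$ ($d=1$) or $k=3$ ($d=2$) is the number of inputs of $F$.

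The heart of the proof is the nonlinear estimate, and this is where Theorem~\ref{mthm1.1'}(1) is used. Since $a\in L^{\infty}$, since $F$ is built from the $k$-linear operator $T$ with the (constant, hence Coifman--Meyer) symbol $m\equiv1$, and since for a good potential $\Re=\nabla(-\Delta+V)^{-1/2}$ is bounded on $L^{p}$ for all $1<p<\infty$, Theorem~\ref{mthm1.1'}(1) applies with all inputs $p_{j}=r_{0}=k+1$ and output $r_{0}'=(k+1)'$ --- consistent because $k\cdot\frac1{k+1}=\frac1{(k+1)'}$ and both $k+1$ and $(k+1)'$ lie in $(1,\infty)$. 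It gives $\|F(u(t))\|_{L^{r_{0}'}_{x}}\lesssim\|u(t)\|_{L^{r_{0}}_{x}}^{k}$; Hölder in time, which closes because $k\,q_{0}'=q_{0}$ (again by $q_{0}=k+1$), promotes this to $\|a\,F(u)\|_{L^{q_{0}'}_{t}L^{r_{0}'}_{x}}\lesssim\|u\|_{L^{q_{0}}_{t}L^{r_{0}}_{x}}^{k}$. Multilinearity of $T$ yields the matching difference bound $\|a(F(u)-F(v))\|_{L^{q_{0}'}_{t}L^{r_{0}'}_{x}}\lesssim\big(\|u\|_{X}^{k-1}+\|v\|_{X}^{k-1}\big)\|u-v\|_{X}$, the difference telescoping into $k$ multilinear terms each carrying one factor $u-v$.

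With these in hand the fixed point is routine. For local well-posedness: for $u_{0}\in L^{2}$ the full Strichartz norm $\|\mathrm{e}^{\mathrm{i}tH}u_{0}\|_{L^{q_{0}}_{t}(\mathbb{R})L^{r_{0}}_{x}}$ is finite, so on a small enough interval $I\ni0$ the quantity $\|\mathrm{e}^{\mathrm{i}tH}u_{0}\|_{L^{q_{0}}_{t}(I)L^{r_{0}}_{x}}$ is as small as we wish; the Duhamel map is then a contraction on a small ball of $X(I)$, giving existence, uniqueness in that class, Lipschitz dependence on $u_{0}$, and (via the usual blow-up alternative) an open maximal interval of existence. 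For the small-data global statement: if $\|u_{0}\|_{L^{2}}\le\varepsilon_{0}=\varepsilon_{0}(d)$ small, then $\|\mathrm{e}^{\mathrm{i}tH}u_{0}\|_{L^{q_{0}}_{t}(\mathbb{R})L^{r_{0}}_{x}}\lesssim\varepsilon_{0}$ and the same contraction runs directly on $\mathbb{R}$, producing a global solution with $\|u\|_{L^{q_{0}}_{t}L^{r_{0}}_{x}}+\|u\|_{L^{\infty}_{t}L^{2}_{x}}\lesssim\varepsilon_{0}$.

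Finally, scattering. For the global small solution, $\mathrm{e}^{-\mathrm{i}tH}u(t)=u_{0}-\mathrm{i}\int_{0}^{t}\mathrm{e}^{-\mathrm{i}sH}a\,F(u(s))\,\mathrm{d}s$, and since $a\,F(u)\in L^{q_{0}'}_{t}L^{r_{0}'}_{x}$ the inhomogeneous Strichartz estimate shows the integral is Cauchy in $L^{2}$ as $t\to\pm\infty$, so $\mathrm{e}^{-\mathrm{i}tH}u(t)\to v^{\pm}$ in $L^{2}$ for some $v^{\pm}\in L^{2}$. To reach the stated scattering to the \emph{free} flow one passes through the wave operators: $\mathrm{H}2$ makes $\sigma(H)$ purely absolutely continuous and $(H,-\Delta)$ is short range, so $v^{\pm}\in L^{2}=L^{2}_{\mathrm{ac}}$ and the strong limits $u^{\pm}:=\lim_{t\to\pm\infty}\mathrm{e}^{-\mathrm{i}t\Delta}\mathrm{e}^{\mathrm{i}tH}v^{\pm}$ exist in $L^{2}$ (they equal $\Omega^{*}v^{+}$ and $\Omega_{+}^{*}v^{-}$, where $\Omega_{+}:=\lim_{t\to+\infty}\mathrm{e}^{\mathrm{i}tH}\mathrm{e}^{-\mathrm{i}t\Delta}$), whence $\|u(t)-\mathrm{e}^{\mathrm{i}t\Delta}u^{\pm}\|_{L^{2}}\le\|u(t)-\mathrm{e}^{\mathrm{i}tH}v^{\pm}\|_{L^{2}}+\|\mathrm{e}^{-\mathrm{i}t\Delta}\mathrm{e}^{\mathrm{i}tH}v^{\pm}-u^{\pm}\|_{L^{2}}\to0$. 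The only non-automatic points are the critical bookkeeping in the second paragraph --- that the diagonal $L^{2}$-critical Strichartz pair $q_{0}=r_{0}=k+1$ simultaneously meets the homogeneity constraint $\sum_{j}1/p_{j}=1/r_{0}'$ of Theorem~\ref{mthm1.1'} and the time identity $k\,q_{0}'=q_{0}$, with every exponent inside $(1,\infty)$ so that part~(1) of that theorem (hence the Riesz-transform hypothesis) is available --- together with the fact that $V$ must remain inside the linear propagator throughout, since treating $Vu$ perturbatively on the right-hand side would lose time integrability over $\mathbb{R}$. I expect the critical bookkeeping to be the one step that genuinely needs care.
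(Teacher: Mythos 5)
Your proposal is correct and follows essentially the same route as the paper: Strichartz estimates for $e^{itH}$ transferred from the free group via the wave operators, Theorem~\ref{mthm1.1'}(1) with $m\equiv 1$ and all exponents equal to the diagonal $L^2$-critical pair $q_0=r_0=2(d+2)/d=k+1$ to produce the nonlinear bound $\|a F(u)\|_{L^{q_0'}_t L^{r_0'}_x}\lesssim\|u\|_{L^{q_0}_{t,x}}^{k}$ and its telescoped difference estimate, a contraction/bootstrap in the Strichartz space for both local and small-data global wellposedness, and finally scattering first to $e^{itH}$ and then to $e^{it\Delta}$ via completeness of $\Omega$. The only cosmetic difference is that you spell out the exponent bookkeeping and the wave-operator passage that the paper compresses into a reference to Dodson's book and the sentence about $\Omega$ being complete.
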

Finally, we organize the paper as follows: we list some notations and basic lemmas  in section 2.  In the third section, we give the proof of the main results, and the fourth section are the applications, including the estimate of Leibniz's law of integer order derivations for the  multilinear distorted  multiplier and small data scattering for a kind of generalized mass-critical NLS with good potential in low dimensions $d=1,2$.



\section{preliminary}
We will use the notation $X\lesssim Y$ whenever there exists some constant $C>0$ so that $X \le C Y$. Similarly, we will use $X \sim Y$ if
$X\lesssim Y \lesssim X$. Also, we use the Japanese bracket convention $\langle x\rangle^2:=1+|x|^2$.
Let $\psi \in C^{\infty}_0(\mathbb{R}^d)$ be a radial, decreasing function
\begin{equation}\label{ykl1.11}\psi(x):=
\left\{
  \begin{array}{ll}
    1, & \hbox{$|x|\leq 1$,} \\
    0, & \hbox{$|x|>2$.}
  \end{array}
\right.
\end{equation}
For $N\in2^{\mathbb{Z}}$, we denote
\begin{equation*}
\psi(\frac{x}{N})-\psi(\frac{2x}{N})=:\phi(\frac{x}{N}).
\end{equation*} The Littlewood-Paley operators are then given by
$$P^{\sharp}_{N}=\phi\left(\frac{D^{\sharp}}{N}\right) \quad \text { and } \quad P^{\sharp}_{<N}=\psi\left(\frac{D^{\sharp}}{N}\right) \quad N \in 2^{\mathbb{Z}},$$
then we have distorted-frequency decomposition,  $f=\sum\limits_{N\in 2^{\mathbb{Z}}}P^{\sharp}_{N}f.$

We define distorted sobolev norm as $\|f\|_{\dot{W}^{k,p}_{\sharp}}:=\||D^{\sharp}|^{k}f\|_{L^p}=\|H^{k/2}f\|_{L^p}$ and $\|f\|_{W^{k,p}_{\sharp}}:=\|f\|_{L^p}+\|f\|_{\dot{W}^{k,p}_{\sharp}}.$ then
\begin{lemma}[\cite{PZS}]For $1<p<\infty$ and $V$ satisfying \(\mathrm{H} 1, \mathrm{H} 2,\) and \(\mathrm{H} 3^{*} \), we have
$$
\|f\|_{\dot{W}^{k,p}_{\sharp}} \sim_{d}\left\|\left(\sum_{N \in 2^{\mathbb{Z}}} N^{2 s}\left|P^{\sharp}_{N} f\right|^{2}\right)^{1 / 2}\right\|_{L_{x}^{p}}, \quad \|f\|_{W^{k,p}_{\sharp}} \sim_{d}\left\|\left(\sum_{N \in 2^{\mathbb{Z}}} \langle N\rangle^{2 s}\left|P^{\sharp}_{N} f\right|^{2}\right)^{1 / 2}\right\|_{L_{x}^{p}}
$$
\end{lemma}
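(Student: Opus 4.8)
\emph{Proof strategy.} The plan is to transfer the assertion to the classical Littlewood--Paley square function estimate through the wave operator $\Omega$ (here $s=k$); I argue for $f$ in the dense class $L^2\cap L^p$, the general case following by density. First I would record the consequences of the intertwining relations \eqref{eq1.12}: since $\mathcal F^\sharp\Omega=\mathcal F$ and $\mathcal F^\sharp$ is an isometric isomorphism of $L^2$ (Theorem \ref{ykl_th1}), $\Omega$ is unitary on $L^2$, and since H2 forces $L^2_{\mathrm{ac}}=L^2$ one has $\Omega^*\Omega=\Omega\Omega^*=\mathrm{Id}$. Applying \eqref{eq1.12} to $m(\xi)=\phi(\xi/N)$ and to $m(\xi)=|\xi|^{s}$ yields
\begin{equation*}
P^\sharp_N=\Omega\,P_N\,\Omega^*,\qquad H^{s/2}=\Omega\,(-\Delta)^{s/2}\,\Omega^*,
\end{equation*}
where $P_N=\phi(\nabla/N)$ is the flat Littlewood--Paley projection; in particular $\Omega^*P^\sharp_N=P_N\Omega^*$ on $L^2\cap L^p$. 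By H3 (a fortiori under H3$^*$), both $\Omega$ and $\Omega^*$ are bounded on $L^p$ for all $1<p<\infty$, with two-sided bounded inverses because $\Omega\Omega^*=\Omega^*\Omega=\mathrm{Id}$.

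Next I would peel off the leading $\Omega$. Using $H^{s/2}=\Omega(-\Delta)^{s/2}\Omega^*$, the $L^p$-boundedness of $\Omega$, and the reverse bound $\|g\|_{L^p}=\|\Omega^*\Omega g\|_{L^p}\lesssim\|\Omega g\|_{L^p}$ with $g=(-\Delta)^{s/2}\Omega^*f$, one obtains
\begin{equation*}
\|f\|_{\dot W^{s,p}_\sharp}=\|H^{s/2}f\|_{L^p}=\|\Omega(-\Delta)^{s/2}\Omega^*f\|_{L^p}\ \sim\ \|(-\Delta)^{s/2}\Omega^*f\|_{L^p}=\|\Omega^*f\|_{\dot W^{s,p}},
\end{equation*}
and then the classical homogeneous Littlewood--Paley theorem ($1<p<\infty$, e.g. Stein/Grafakos) gives
\begin{equation*}
\|\Omega^*f\|_{\dot W^{s,p}}\ \sim_d\ \Big\|\big(\textstyle\sum_{N\in2^{\mathbb Z}}N^{2s}\,|P_N\Omega^*f|^2\big)^{1/2}\Big\|_{L^p}.
\end{equation*}

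Finally I would transfer the square function back: since $P_N\Omega^*f=\Omega^*P^\sharp_Nf$, applying Khintchine's inequality with Rademacher functions $r_N$ pointwise in $x$ gives
\begin{equation*}
\Big\|\big(\textstyle\sum_N N^{2s}|\Omega^*P^\sharp_Nf|^2\big)^{1/2}\Big\|_{L^p}^p\ \sim\ \int_0^1\Big\|\Omega^*\big(\textstyle\sum_N r_N(t)\,N^{s}P^\sharp_Nf\big)\Big\|_{L^p}^p\,dt,
\end{equation*}
and the two-sided $L^p$ bounds for $\Omega^*$ (and for $\Omega$, through $\Omega\Omega^*=\mathrm{Id}$) replace the right-hand side by $\int_0^1\|\sum_N r_N(t)N^sP^\sharp_Nf\|_{L^p}^p\,dt$, which is $\sim\|(\sum_N N^{2s}|P^\sharp_Nf|^2)^{1/2}\|_{L^p}^p$ by Khintchine once more. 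Chaining these equivalences proves the homogeneous identity. The inhomogeneous statement is handled identically, with $N^{2s}$ replaced by $\langle N\rangle^{2s}$, the inhomogeneous Littlewood--Paley theorem in place of the homogeneous one, and $\|f\|_{W^{s,p}_\sharp}\sim\|f\|_{L^p}+\|f\|_{\dot W^{s,p}_\sharp}$.

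I expect the only non-formal point to be the passage from the scalar $L^p$-boundedness of $\Omega,\Omega^*$ to the vector-valued (square-function) bound; I would settle it by the Khintchine argument above, which uses nothing beyond $L^p\to L^p$ mapping, though one could alternatively invoke the Calder\'on--Zygmund structure of the wave operators to get boundedness directly on $L^p(\ell^2)$. Everything else is bookkeeping with the intertwining identities and the classical Littlewood--Paley theory, so this lemma (quoted here from \cite{PZS}) really reduces to the flat case.
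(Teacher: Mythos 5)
Your proof is correct and takes the same route the cited source \cite{PZS} takes: transfer to the flat case via the intertwining identities $P^\sharp_N=\Omega P_N\Omega^*$, $H^{s/2}=\Omega(-\Delta)^{s/2}\Omega^*$, apply the classical Littlewood--Paley theorem, and carry the square function back using a Marcinkiewicz--Zygmund/Khintchine argument (which is precisely what the paper packages as part (c) of Lemma \ref{re1.6}). The only caveat worth noting is the paper's own typographical slip, $k$ on the left and $s$ on the right of the displayed equivalence, which you correctly identified as the same index.
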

A pair $(p, q)$ is called admissible if
$$
\frac{2}{p}=d\left(\frac{1}{2}-\frac{1}{q}\right)
$$
and $4 \leq p \leq \infty$ when $d=1 ; 2<p \leq \infty$ when $d=2$; or $2 \leq p \leq \infty$ when $d \geq 3$. By intertwining relations \eqref{eq1.12} and the classical Strichartz estimate, Suppose $(p, q)$ and $(\tilde{p}, \tilde{q})$ are admissible pairs, and $I \subset \mathbf{R}$ is a possibly infinite time interval. Then we have the following Strichartz estimate for the Schr\"{o}dinger operator $H=-\Delta + V$ providing that $V$ satisfies \(\mathrm{H} 1, \mathrm{H} 2,\) and \(\mathrm{H} 3^{*} \).
\begin{lemma}[\cite{PZS}, Strichartz estimate for the Schr\"{o}dinger operator]
\begin{align}
\left\|e^{i t H} u_{0}\right\|_{L_{t}^{\bar{p}} L_{x}^{\bar{q}}\left(I \times \mathbf{R}^{d}\right)} & \lesssim_{\tilde{p}, \tilde{q}, d}\left\|u_{0}\right\|_{L^{2}\left(\mathbf{R}^{d}\right)}, \\
\left\|\int_{\mathbf{R}} e^{-i t H} F(t) d t\right\|_{L^{2}\left(\mathbf{R}^{d}\right)} & \lesssim_{p, q, d}\|F\|_{L_{t}^{p^{\prime}} L_{x}^{q^{\prime}}\left(I \times \mathbf{R}^{d}\right)}
\end{align}
and
\begin{equation}\label{YEQ3.33}
\left\|\int_{\tau<t, \tau \in I} e^{i(t-\tau) H} F(\tau) d \tau\right\|_{L_{t}^{\tilde{p}} L_{x}^{\tilde{q}}\left(I \times \mathbf{R}^{d}\right)} \lesssim p, q, \tilde{p}, \tilde{q}\|F\|_{L_{t}^{p^{\prime}} L_{x}^{q^{\prime}}\left(I \times \mathbf{R}^{d}\right)}
\end{equation}
\end{lemma}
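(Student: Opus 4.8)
The plan is to deduce all three estimates from the classical Strichartz inequalities for the free Schr\"{o}dinger group $e^{itH_0}$, $H_0=-\Delta$, by conjugating with the wave operator through the intertwining identity $e^{itH}=\Omega e^{itH_0}\Omega^{*}$ recorded in \eqref{eq1.12}. First I would collect the two structural properties of $\Omega$ that drive everything: (i) $\Omega$ is unitary on $L^{2}(\mathbb{R}^{d})$, since $\mathcal{F}^{\sharp}\Omega=\mathcal{F}$ and under H1 both $\mathcal{F}^{\sharp}$ and $\mathcal{F}$ are $L^{2}$-isometric isomorphisms, while H2 ensures there is no point spectrum so the range is all of $L^{2}$; and (ii) under H3 (Theorem \ref{yth1.1.1}) both $\Omega$ and $\Omega^{*}$ extend to bounded operators on $L^{p}(\mathbb{R}^{d})$ for every $p\in(1,\infty)$. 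Every admissible exponent $\bar q$ appearing in the statement, together with its H\"{o}lder dual, lies in $(1,\infty)$ except for the single endpoint $\bar q=\infty$ occurring in dimensions $d=1,2$, so (i)--(ii) will suffice away from that endpoint.

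For the homogeneous estimate I would write $e^{itH}u_{0}=\Omega\bigl(e^{itH_0}(\Omega^{*}u_{0})\bigr)$, apply the $L^{\bar q}_{x}$-boundedness of $\Omega$ for each fixed $t$, then take the $L^{\bar p}_{t}$ norm and invoke the free Strichartz estimate, and finally use the $L^{2}$-isometry of $\Omega^{*}$ to replace $\|\Omega^{*}u_{0}\|_{L^{2}}$ by $\|u_{0}\|_{L^{2}}$. For the dual estimate the key observation is that $\Omega$ acts only in $x$, hence commutes with $\int_{\mathbb{R}}e^{-itH_0}(\cdot)\,dt$; so $\int_{\mathbb{R}}e^{-itH}F(t)\,dt=\Omega\int_{\mathbb{R}}e^{-itH_0}(\Omega^{*}F)(t)\,dt$, and the $L^{2}$-isometry of $\Omega$, the dual free Strichartz estimate, and the $L^{q'}_{x}$-boundedness of $\Omega^{*}$ chain together to give the claim. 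The retarded estimate \eqref{YEQ3.33} is handled identically: since $\Omega$ is $t$-independent it passes through the truncated Duhamel integral, giving $\int_{\tau<t}e^{i(t-\tau)H}F(\tau)\,d\tau=\Omega\int_{\tau<t}e^{i(t-\tau)H_0}(\Omega^{*}F)(\tau)\,d\tau$, after which one applies the $L^{\tilde q}_{x}$-boundedness of $\Omega$ pointwise in $t$, the classical retarded (Christ--Kiselev) Strichartz inequality for $e^{itH_0}$, and the $L^{q'}_{x}$-boundedness of $\Omega^{*}$.

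The only genuine obstacle is the endpoint $\bar q=\infty$ (equivalently $\bar p=4$) occurring in $d=1,2$: there the conjugation argument would require $\Omega$ bounded on $L^{\infty}$ and $\Omega^{*}$ on $L^{1}$, which Theorem \ref{yth1.1.1}(3) furnishes only under the additional ``exceptional potential with $a=1$'' hypothesis. In $d\geq 3$ the Keel--Tao endpoint pair causes no difficulty, since $\tfrac{2d}{d-2}$ and $\tfrac{2d}{d+2}$ both lie in $(1,\infty)$ and hence $\Omega,\Omega^{*}$ are bounded there. Because the applications of this lemma (Theorem \ref{th-y3.3}, $d=1,2$) may be run with non-endpoint admissible pairs, no loss results from sidestepping this issue, and all remaining steps are the routine reduction just described.
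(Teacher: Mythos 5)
Your proposal is correct and is essentially the paper's own argument: the paper derives this lemma precisely by combining the intertwining relation $e^{itH}=\Omega e^{itH_0}\Omega^{*}$ from \eqref{eq1.12} with the classical (and Christ--Kiselev retarded) Strichartz estimates and the $L^p$-boundedness of $\Omega$, $\Omega^{*}$, exactly as you describe. Your added caveat about the $\bar q=\infty$ endpoint in $d=1$ is a sensible observation the paper does not make explicit, but it does not affect the applications, which use non-endpoint pairs.
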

 Before we begin to prove  Theorem \ref{mthm1.1'}, though, we need the following maximal and square function estimates, which actually are stated by the Lemma 3.3 in \cite{PZS}.

\begin{lemma}[Lemma 3.3 in \cite{PZS}]\label{re1.6}
(a) Suppose that $W$ is an operator that is point-wise bounded by an $L^{p}$ -bounded
positive operator, that is, satisfying the point-wise bound
\[
|W f(x)| \leq C \tilde{W}|f|(x) \quad \text { for all } f \in L^{p}\left(\mathbb{R}^{d}\right), \quad x \in \mathbb{R}^{d}
\]
for some positive operator $\tilde{W}$ that is bounded on $L^{p}\left(\mathbb{R}^{d}\right)$ for $1 \leq p \leq \infty .$ Let $\psi \in C_{0}^{\infty}\left(\mathbb{R}^{d}\right) .$ For each $n \in \mathbb{R}^{d},$ the operators
\[
f \mapsto \sup _{N \in 2^{\mathbb{Z}}}\left|W \mathrm{e}^{2 \pi \mathrm{i} \frac{n \cdot \nabla}{N}} \psi\left(\frac{\nabla}{N}\right) f\right| \quad \text { and } \quad f \mapsto \sup_{\substack{N_{1}, N_{2} \in 2^{\mathbb{Z}}\\ N_{1} \geq N_{2}}}\left|W \mathrm{e}^{2 \pi \mathrm{i} \frac{\mathrm{n\cdot \nabla}}{N_{1}}} \psi\left(\frac{\nabla}{N_{2}}\right) f\right|
\]
are bounded on $L^{p}$ for all $1<p \leq \infty$ with a bound $\lesssim\langle n\rangle^{d}$.

(b) For each $n \in \mathbb{R}^{d}$, the operators
\[
f \mapsto \sup _{N \in 2^{\mathbb{Z}}}\left| \mathrm{e}^{2 \pi \mathrm{i} \frac{n \cdot D^{\sharp}}{N}} \psi\left(\frac{D^{\sharp}}{N}\right) f\right| \quad \text { and } \quad f \mapsto \sup_{\substack{N_{1}, N_{2} \in 2^{\mathbb{Z}}\\ N_{1} \geq N_{2}}}\left| \mathrm{e}^{2 \pi \mathrm{i} \frac{\mathrm{n\cdot D^{\sharp}}}{N_{1}}} \psi\left(\frac{D^{\sharp}}{N_{2}}\right) f\right|
\]
are bounded on $L^{p}\left(\mathbb{R}^{d}\right)$ for all $1<p \leq \infty$ with a bound $\lesssim\langle n\rangle^{d}$.

(c) Let $U$ be any bounded linear operator on $L^{p}$ for some $1 \leq p<\infty$ and suppose that $\left\{f_{n}\right\} \subset L^{p}\left(\mathbb{R}^{d}\right)$ is a sequence of functions. Then
\[
\left\|\left(\sum_{n \in \mathbb{Z}}\left|U f_{n}\right|^{2}\right)^{1 / 2}\right\|_{L^{p}(\mathbb{R}^{d})} \lesssim \left\|\left(\sum_{n \in \mathbb{Z}}\left|f_{n}\right|^{2}\right)^{1 / 2}\right\|_{L^{p}\left(\mathbb{R}^{d}\right)},
\]
whenever the right-hand side is finite.

(d) Moreover, if $\phi$ is smooth and supported on an annulus, the operator
\[
f \mapsto\left(\sum_{N_{2} \in 2^{\mathbb{Z}}} \sup _{\substack{N_1 \in 2^{\mathbb{Z}}\\N_{1} \geq N_{2}}}\left|e^{2 \pi i \frac{n\cdot D^{\sharp}}{N_{1}}} \phi\left(\frac{D^{\sharp}}{N_{2}}\right) f\right|^{2}\right)^{1 / 2}
\]
is bounded on $L^{p}\left(\mathbb{R}^{d}\right)$ for all $1<p<\infty$ with bound $\lesssim\langle n\rangle^{d}$.

(e)  we have the following point-wise inequality
\[
\left|\psi\left(\frac{\nabla}{N}\right) f(x-y)\right| \lesssim\langle N|y|\rangle^{d} M f(x),
\]
where $M f$ is the Hardy-Littlewood maximal function.
\end{lemma}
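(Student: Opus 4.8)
\textbf{Proof proposal for Lemma \ref{re1.6}.}

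The plan is to prove parts (a), (c), (e) by classical Calder\'on--Zygmund / Fefferman--Stein machinery, then deduce the distorted versions (b) and (d) via the intertwining relations \eqref{eq1.12} and the $L^p$-boundedness of the wave operators from H3 (and $\mathrm{H3}^*$ where needed). First I would dispose of (e), which is the elementary building block: writing $\psi(\nabla/N)f = f * \check\psi_N$ with $\check\psi_N(z) = N^d\check\psi(Nz)$ and $\check\psi$ Schwartz, the convolution kernel satisfies $|\check\psi_N(z-y)|\lesssim N^d\langle N(z-y)\rangle^{-d-1}\lesssim \langle N|y|\rangle^{d}\, N^d\langle Nz\rangle^{-d-1}$ by the triangle inequality $\langle N z\rangle \lesssim \langle N(z-y)\rangle\langle Ny\rangle$; integrating the resulting majorant against $|f(x-z)|$ and dominating by the Hardy--Littlewood maximal function $Mf(x)$ gives the claimed pointwise bound with constant $\lesssim\langle N|y|\rangle^d$. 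For (c), this is literally the vector-valued (Fefferman--Stein) inequality for an arbitrary bounded linear operator $U$ on $L^p$: since $\ell^2$-valued $L^p$-boundedness of a scalar $L^p$-bounded operator follows from the Marcinkiewicz--Zygmund theorem (or Khintchine's inequality applied to random signs), one gets $\|(\sum_n |Uf_n|^2)^{1/2}\|_{L^p}\lesssim \|(\sum_n|f_n|^2)^{1/2}\|_{L^p}$ for all $1\le p<\infty$ with constant depending only on $\|U\|_{L^p\to L^p}$.

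Next I would prove (a). Fix $n$. For the first operator, note $\mathrm e^{2\pi\mathrm i\, n\cdot\nabla/N}\psi(\nabla/N)f(x) = f * K_N(\cdot\, ;n)$ where $K_N(z;n) = N^d \check\psi\big(N z + 2\pi n\big)$ is the translate of a fixed Schwartz kernel; exactly as in (e) one obtains the pointwise bound $|\mathrm e^{2\pi\mathrm i\, n\cdot\nabla/N}\psi(\nabla/N)f(x)|\lesssim \langle n\rangle^d Mf(x)$ \emph{uniformly in $N\in 2^{\mathbb Z}$}, hence $\sup_N |\cdots|\lesssim \langle n\rangle^d Mf$, and composing with $W$ (pointwise dominated by $\tilde W$) and then invoking $L^p$-boundedness of $\tilde W$ and of $M$ for $1<p\le\infty$ gives the bound $\lesssim\langle n\rangle^d$. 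For the second, two-parameter supremum with $N_1\ge N_2$, the kernel is $N_2^d\check\psi(N_2 z + 2\pi (N_2/N_1) n)$; since $N_2/N_1\le 1$ the shift $2\pi(N_2/N_1)n$ has size $\le 2\pi|n|$, so the same Schwartz-tail estimate produces the uniform majorant $\langle n\rangle^d Mf(x)$, and the argument concludes identically. Part (b) is then immediate: by \eqref{eq1.12}, $\psi(D^\sharp/N) = \Omega\,\psi(\nabla/N)\,\Omega^*$ and $\mathrm e^{2\pi\mathrm i\, n\cdot D^\sharp/N} = \Omega\,\mathrm e^{2\pi\mathrm i\, n\cdot\nabla/N}\,\Omega^*$, so the distorted operators in (b) equal $\Omega\circ(\text{flat operator})\circ\Omega^*$; taking $W=\Omega$ in part (a), which is pointwise-dominated-by-$L^p$-bounded since H3 gives $\Omega,\Omega^*$ bounded on $L^p$ for $1<p<\infty$ (the pointwise-domination hypothesis of (a) is used precisely to allow $W=\Omega$, or one notes $\sup_N|\Omega g_N| \le \Omega^{*}$-type bounds are replaced by the crude $\sup_N|\Omega g_N|$ inside $L^p$ controlled by $\|\Omega\|$ times $\|\sup_N|g_N|\|_{L^p}$), followed by boundedness of $\Omega^*$, yields the bound $\lesssim\langle n\rangle^d$ for $1<p\le\infty$.

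Finally, (d) is the genuinely square-function statement and is where the real work lies. Setting $g_{N_2} := \sup_{N_1\ge N_2}|\mathrm e^{2\pi\mathrm i\, n\cdot D^\sharp/N_1}\phi(D^\sharp/N_2)f|$, I want $\|(\sum_{N_2} g_{N_2}^2)^{1/2}\|_{L^p}\lesssim\langle n\rangle^d\|f\|_{L^p}$ for $1<p<\infty$. The approach: by \eqref{eq1.12} transfer to the flat side, $\phi(D^\sharp/N_2) = \Omega\phi(\nabla/N_2)\Omega^*$ and $\mathrm e^{2\pi\mathrm i\, n\cdot D^\sharp/N_1} = \Omega\mathrm e^{2\pi\mathrm i\, n\cdot\nabla/N_1}\Omega^*$, so $g_{N_2} \le |\Omega(\sup_{N_1\ge N_2}|\mathrm e^{2\pi\mathrm i\, n\cdot\nabla/N_1}\phi(\nabla/N_2)\Omega^*f|)|$; writing $h=\Omega^*f$ it suffices to bound the \emph{flat} operator $h\mapsto (\sum_{N_2}\sup_{N_1\ge N_2}|\mathrm e^{2\pi\mathrm i\, n\cdot\nabla/N_1}\phi(\nabla/N_2)h|^2)^{1/2}$ on $L^p$, and then apply part (c) with $U=\Omega$ to re-insert $\Omega$ and boundedness of $\Omega^*$ to remove it. For the flat operator, the key point is that for \emph{fixed} $N_2$, the inner supremum over translates $\sup_{N_1\ge N_2}|\mathrm e^{2\pi\mathrm i\, n\cdot\nabla/N_1}\phi(\nabla/N_2)h|$ is pointwise $\lesssim\langle n\rangle^d\, M(\phi(\nabla/N_2)h)$ by the same Schwartz-kernel estimate as in (a) (the shift $(N_2/N_1)n$ is $\le|n|$ in size), reducing matters to $\|(\sum_{N_2}|M(\phi(\nabla/N_2)h)|^2)^{1/2}\|_{L^p}$; this is controlled by the Fefferman--Stein vector-valued maximal inequality for $1<p<\infty$ by $\|(\sum_{N_2}|\phi(\nabla/N_2)h|^2)^{1/2}\|_{L^p}$, which is $\lesssim\|h\|_{L^p}$ by the standard Littlewood--Paley square function estimate (since $\phi$ is supported on an annulus). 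The main obstacle is organizing the two-parameter supremum so that the $N_1$-supremum is handled \emph{uniformly} before the $N_2$-summation — i.e. extracting the pointwise bound by $\langle n\rangle^d M(\phi(\nabla/N_2)h)$ with a constant independent of $N_2$ — and then correctly invoking vector-valued maximal + Littlewood--Paley; the transference through $\Omega,\Omega^*$ is routine given H3 but must be threaded through part (c) since $\Omega$ does not commute with the square function.
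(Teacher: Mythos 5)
The paper does not actually prove this lemma: it is imported verbatim as Lemma 3.3 of \cite{PZS}, so there is no in-paper argument to compare yours against. Judged on its own terms, your treatment of (a), (c) and (e) follows the standard route and is essentially sound: (c) is Marcinkiewicz--Zygmund, and (a) reduces to (e) because the modulation $e^{2\pi i n\cdot\nabla/N_1}$ is a spatial translation by $2\pi n/N_1$, whose size after rescaling by the mollification scale $N_2\le N_1$ is $\le 2\pi|n|$. One small repair in (e): the chain $\langle N(z-y)\rangle^{-d-1}\lesssim\langle Ny\rangle^{?}\langle Nz\rangle^{-d-1}$ via the triangle inequality yields the exponent $d+1$, not $d$; to obtain $\langle N|y|\rangle^{d}$ one should split the convolution integral into $|x-z|\le 2|y|$ (compare directly with the average of $|f|$ over $B(x,2|y|)$, which costs $(N|y|)^{d}$) and $|x-z|>2|y|$ (where the translated kernel is comparable to the untranslated one). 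This is harmless for the applications, since any fixed power of $\langle n\rangle$ is absorbed by the $(1+|n_1|+\cdots+|n_{k+1}|)^{-3(k+1)d}$ decay of the Fourier coefficients.

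The genuine gap is in your passage from the flat statements to the distorted ones in (b) and (d). You write $\sup_{N}|\Omega g_{N}|\le|\Omega(\sup_{N}|g_{N}|)|$ (and the analogous step inside the square function in (d)), and as a fallback you assert that $\|\sup_{N}|\Omega g_{N}|\|_{L^{p}}$ is controlled by $\|\Omega\|_{L^{p}\to L^{p}}\,\|\sup_{N}|g_{N}|\|_{L^{p}}$. Neither holds: $\Omega$ is not a positive operator, so the supremum cannot be pulled through it, and scalar $L^{p}$-boundedness of a linear operator does not imply its $\ell^{\infty}$-valued boundedness --- the automatic vector-valued extension in Marcinkiewicz--Zygmund (your part (c)) is an $\ell^{2}$ phenomenon and fails for $\ell^{\infty}$. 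The hypothesis in part (a), that $W$ be pointwise dominated by a positive $L^{p}$-bounded operator $\tilde W$, exists precisely to fix this: for positive $\tilde W$ one has $\sup_{N}|Wg_{N}|\le C\,\tilde W(\sup_{N}|g_{N}|)$ and $\bigl(\sum_{N}|Wg_{N}|^{2}\bigr)^{1/2}\le C\,\tilde W\bigl((\sum_{N}|g_{N}|^{2})^{1/2}\bigr)$. Hence to prove (b) and (d) you must know that $\Omega$ itself admits such a positive pointwise majorant; this is a structural property coming from the representation formulas behind the Yajima-type $W^{k,p}$ theorems (in which $\Omega-I$ is an integral operator whose kernel is dominated by an integrable profile composed with translations and reflections), and it is the real content of the proof in \cite{PZS}. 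It does not follow from the bare $L^{p}$-boundedness asserted in H3. Once that majorization is supplied, your reduction of (d) to the Fefferman--Stein vector-valued maximal inequality plus the Littlewood--Paley square-function estimate is correct.
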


\section{the proof of  Theorem \ref{mthm1.1'}}

\begin{proof}[Proof of  Theorem \ref{mthm1.1'}] Taking one test function $f_{k+1}\in L^r(\mathbb{R}^d)$, we define $k+1$-linear form $\Lambda(f_1,\cdots,f_k,f_{k+1})$ which is associated to the $k$-linear operator $T(f_1,\ldots,f_k)$  in the framework of the distorted Fourier transform as follows,
\begin{equation}\label{eqy3.1.1}
	\begin{split} &\Lambda(f_1,\cdots,f_k,f_{k+1})\\
        :=&\int_{\mathbb{R}^{d}}T(f_1,\cdots,f_k)f_{k+1}(x)dx\\
		 =&\idotsint m(\xi_1,\cdots,\xi_k)f_1^{\sharp}(\xi_1)\cdots f^{\sharp}_k(\xi_k)e(x,\xi_1)\cdots e(x,\xi_k)f_{k+1}(x)d\xi_1 \cdots d\xi_kdx\\
        =&\idotsint m(\xi_1,\cdots,\xi_k)f_1^{\sharp}(\xi_1)\cdots f^{\sharp}_k(\xi_k)f_{k+1}^{\sharp}(\xi_{k+1})M(\xi_1,\cdots,\xi_k,\xi_{k+1}) d\xi_1 \cdots d\xi_k d\xi_{k+1}
	\end{split}
\end{equation}
where $M(\xi_1,\cdots,\xi_k,\xi_{k+1})=\int_{\mathbb{R}^{d}} e(x,\xi_1)e(x,\xi_2)\cdots e(x,\xi_{k+1})dx$ is nonlinear spectral distribution.
By duality, for $\frac{1}{r}+\frac{1}{r'}=1$,
\begin{align*}
\|T(f_1,\cdots,f_k)\|_{L^{r'}}
  \lesssim \sup_{\|f_{k+1}\|_{L^r}\leq 1}|\Lambda(f_1,\cdots,f_k,f_{k+1})|.
\end{align*}
We generalize the multiplier $m(\xi_1,\cdots,\xi_k)$  as $k+1$ variables multiplier $m(\xi_1,\cdots,\xi_k,\xi_{k+1})$,
\begin{align*}
  &\Lambda(f_1,\cdots,f_k,f_{k+1})\\
  =&\idotsint m(\xi_1,\cdots,\xi_k,\xi_{k+1})f_1^{\sharp}(\xi_1)\cdots f^{\sharp}_k(\xi_k)f_{k+1}^{\sharp}(\xi_{k+1})M(\xi_1,\cdots,\xi_k,\xi_{k+1}) d\xi_1 \cdots d\xi_k d\xi_{k+1}
\end{align*}




{\bf{Step1}}. Decomposition of $\Lambda$. we start by Littlewood-Paley decomposition of $f_j$ with respect to distorted Fourier transform.
$$f_j=\sum_{N_j\in 2^{\mathbb{Z}}}P^{\sharp}_{N_j}f_j=\sum_{N_j\in 2^{\mathbb{Z}}}f_{j;N_j},\,\,\, j=1,\cdots,k+1.$$
As a result, we obtain that
\begin{equation}\label{yeq2.1}
\begin{split}
 \Lambda(f_1,\cdots,f_{k+1})&=\sum_{N_1,\cdots,N_{k+1}\in 2^{\mathbb{Z}}}\Lambda(f_{1;N_1},\cdots,f_{k+1;N_{k+1}})\\
 &=:  \Lambda_L(f_1,\cdots,f_{k+1})+ \Lambda_H(f_1,\cdots,f_{k+1}).
\end{split}
\end{equation}
where $$\Lambda_L(f_1,\cdots,f_{k+1}):=\sum\limits_{N_1\leq 1,\cdots,N_{k+1}\leq 1}\Lambda(f_{1;N_1},\cdots,f_{k+1;N_{k+1}})=\Lambda(f_{1;\leq1},\cdots,f_{k+1;\leq1})$$ and
\begin{equation}
\begin{split}
      \Lambda_H(f_1,\cdots,f_{k+1})&:=\sum_{\max\{N_1,\cdots,N_{k+1}\}\geq 1}\Lambda(f_{1;N_1},\cdots,f_{k+1;N_{k+1}})\\
      &=\sum_{N_1\geq \max\{1,N_2,\cdots,N_{k+1}\}}\Lambda(f_{1;N_1},\cdots,f_{k+1;N_{k+1}})\\
      &\quad+\sum_{N_2\geq \max\{1,N_1,N_3,\cdots,N_{k+1}\}}\Lambda(f_{1;N_1},\cdots,f_{k+1;N_{k+1}})\\
      &\quad+ \cdots \\
      &\quad+\sum_{N_{k+1}\geq \max\{1,N_1,N_3,\cdots,N_{k}\}}\Lambda(f_{1;N_1},\cdots,f_{k+1;N_{k+1}})\\
      &=:I_1+\cdots+I_{k+1}.
\end{split}
\end{equation}
We first take $\Lambda_L(f_1,\cdots,f_{k+1})$ into consideration.  Let $\tilde{\psi} \in C^{\infty}_0(\mathbb{R}^d)$ be given such that $\tilde{\psi}\psi=\psi$. Define $\tilde{m}$ by
 \begin{equation}\label{2222'}
 \tilde{m}\left(\xi_1,\xi_2,\cdots,\xi_{k+1}\right) :=m(\xi_1,\xi_2,\cdots,\xi_{k+1})\tilde{\psi}(\xi_1) \tilde{\psi}(\xi_2)\cdots \tilde{\psi}(\xi_{k+1}),
 \end{equation}
then we expand $\tilde{m}$ in a Fourier series, if $(\xi_1,\cdots,\xi_{k+1})\in [-K/2,K/2]^{(k+1)d},$

\begin{equation}
 \tilde{m}\left(\xi_1,\cdots,\xi_{k+1}\right)=\sum_{n_1,\cdots,n_{k+1}\in \mathbb{Z}^d}a(n_1,\cdots,n_{k+1})e^{\frac{2\pi i}{K}(\sum_{j=1}^{k+1}n_j\cdot\xi_j)}.
\end{equation}
Using the stationary phase method, we have the bound for $a(n_1,\cdots,n_{k+1})$:
\begin{equation}
  \left|a(n_1,\cdots,n_{k+1}) \right|\lesssim (1+|n_1|+\cdots+|n_{k+1}|)^{-3(k+1)d},
\end{equation}
thus by H\"{o}lder's inequality and Lemma \ref{re1.6} (b), we have
\begin{equation*}
\begin{split}
&|\Lambda_L(f_1,\cdots,f_{k+1})|\\
=&|\Lambda(f_{1;\leq1},\cdots,f_{k+1;\leq1})|\\
=& |\sum_{n_1,n_2,\cdots,n_{k+1}}a(n_1,\cdots,n_{k+1}) \int f_{1;\leq 1,n_1}(x) f_{2;\leq 1,n_2}(x)\cdots f_{k+1;\leq 1,n_{k+1}}(x) dx|\\
\lesssim &  \sum_{n_1,n_2,\cdots,n_{k+1}} (1+|n_1|+\cdots+|n_{k+1}|)^{-3(k+1)d}\prod_{j=1}^{k+1}\|f_{j;\leq1,n_j}\|_{L^{p_j}}\\
\lesssim & \prod_{j=1}^{k+1}\|f_{j}\|_{L^{p_j}}
\end{split}
\end{equation*}where $f_{j;\leq1,n_j}(x):=\mathcal{F}^{\sharp^{-1}}(e^{\frac{2\pi i}{K}n_j\cdot\xi_j}f^{\sharp}_{j;\leq1}(\xi_j))$.

For $\Lambda_H(f_1,\cdots,f_{k+1})$ part,  we can just treat with $I_1$, because the other terms can be controlled in the same way,
\begin{equation}
\begin{split}
      I_1&=\sum_{N_1\geq \max\{1,N_2,\cdots,N_{k+1}\}}\Lambda(f_{1;N_1},\cdots,f_{k+1;N_{k+1}})\\
      &=\sum_{N_1\geq1}\sum_{N_2\leq N_1}\sum_{\substack{N_3,N_4,\cdots,N_{k+1}\\ (\leq N_2)}} \Lambda(f_{1;N_1},\cdots,f_{k+1;N_{k+1}})\\
      &\quad+\sum_{N_1\geq1}\sum_{N_3\leq N_1}\sum_{\substack{N_2,N_4,\cdots,N_{k+1}\\ (\leq N_2)}} \Lambda(f_{1;N_1},\cdots,f_{k+1;N_{k+1}})\\
      &\quad+ \cdots \\
      &\quad+\sum_{N_1\geq1}\sum_{N_{k+1}\leq N_1}\sum_{\substack{N_2,N_3,\cdots,N_{k}\\ (\leq N_2)}} \Lambda(f_{1;N_1},\cdots,f_{k+1;N_{k+1}})\\
      &=:I_{1,2}+\cdots+I_{1,,k+1}.
\end{split}
\end{equation}
In the following, due to the similarities, we still only estimate the first term $I_{1,2}$, and our default summation range  about $N_1$ is  $N_1\geq1$, if not necessary, we will not mention it again.

 Let $\tilde{\phi}\in C_0^{\infty}(\mathbb{R})$ be given such that $\tilde{\phi}\phi=\phi$. Define $\tilde{m}^{N_1}$ by
 \begin{equation}\label{2222}
 \tilde{m}^{N_1}\left(\frac{\xi_1}{N_1},\frac{\xi_2}{N_1},\cdots,\frac{\xi_{k+1}}{N_1}\right) :=m(\xi_1,\xi_2,\cdots,\xi_{k+1})\tilde{\phi}\left(\frac{\xi_1}{N_1}\right) \tilde{\phi}\left(\frac{\xi_2}{N_1}\right)\cdots \tilde{\phi}\left(\frac{\xi_{k+1}}{N_1}\right),
 \end{equation}
then we expand $\tilde{m}^{N_1}$ in a Fourier series, if $(\xi_1,\cdots,\xi_{k+1})\in [-K/2,K/2]^{(k+1)d},$

\begin{equation}
 \tilde{m}^{N_1}\left(\xi_1,\cdots,\xi_{k+1}\right)=\sum_{n_1,\cdots,n_{k+1}\in \mathbb{Z}^d}a^{N_1}(n_1,\cdots,n_{k+1})e^{\frac{2\pi i}{K}(\sum_{j=1}^{k+1}n_j\cdot\xi_j)}.
\end{equation}
Using the stationary phase method, we have the bound for $a^{N_1}(n_1,\cdots,n_{k+1})$:

\begin{equation}
  \left|a^{N_1}(n_1,\cdots,n_{k+1}) \right|\lesssim (1+|n_1|+\cdots+|n_{k+1}|)^{-3(k+1)d}
\end{equation}
meanwhile,
\begin{equation}
\begin{split}
  I_{1,2} &=\sum_{N_1}\sum_{N_2\leq N_1}\sum_{\substack{N_3,N_4,\cdots,N_{k+1}\\ (\leq N_2)}} \Lambda(f_{1;N_1},\cdots,f_{k+1;N_{k+1}})\\
    &=\sum_{N_1}\sum_{N_2\leq N_1}\sum_{\substack{N_3,N_4,\cdots,N_{k+1}\\ (\leq N_2)}}\sum_{n_1,n_2,\cdots,n_{k+1}}a^{N_1}(n_1,\cdots,n_{k+1}) \Xi^{n_1,\cdots,n_{k+1}}_{N_1,\cdots,N_{k+1}}.
\end{split}
\end{equation}
where
\begin{equation*}
 \begin{split}
  &\Xi^{n_1,\cdots,n_{k+1}}_{N_1,\cdots,N_{k+1}}\\
  :=&\idotsint \left(e^{\frac{2\pi i}{KN_1}n_1\cdot\xi_1}f^{\sharp}_{1;N_1}(\xi_1)\right)\cdots \left(e^{\frac{2\pi i}{KN_1}n_{k+1}\cdot\xi_{k+1}}f^{\sharp}_{k+1;N_{k+1}}(\xi_{k+1})\right)\\
  & \qquad M(\xi_1,\cdots,\xi_{k+1}) d\xi_1\cdots d\xi_{k+1}\\
  =& \idotsint f^{\sharp}_{1;N_1,n_1}(\xi_1) f^{\sharp}_{2;N_2,n_2,N_1}(\xi_2)\cdots f^{\sharp}_{k+1;N_{k+1},n_{k+1},N_1}(\xi_{k+1})\\
  & \qquad M(\xi_1,\cdots,\xi_{k+1}) d\xi_1\cdots d\xi_{k+1}\\
  =& \int f_{1;N_1,n_1}(x) f_{2;N_2,n_2,N_1}(x)\cdots f_{k+1;N_{k+1},n_{k+1},N_1}(x)dx
 \end{split}
\end{equation*}
Here we have denoted
$f^{\sharp}_{1;N_1,n_1}(\xi_1):=e^{\frac{2\pi i}{KN_1}n_1\cdot\xi_1}f^{\sharp}_{1;N_1}(\xi_1)$, and  $f^{\sharp}_{j;N_j,n_j,N_1}(\xi_j):=e^{\frac{2\pi i}{KN_1}n_j\cdot\xi_j}f^{\sharp}_{j;N_j}(\xi_j)$, $j=2,\cdots,k+1.$
Therefore,
\begin{equation}\label{111111}
\begin{split}
 |I_{1,2}| =&\left|\sum_{N_1}\sum_{N_2\leq N_1}\sum_{\substack{N_3,N_4,\cdots,N_{k+1}\\ (\leq N_2)}} \Lambda(f_{1;N_1},\cdots,f_{k+1;N_{k+1}})\right|\\
 =&\left|\sum_{N_1}\sum_{N_2\leq N_1}\sum_{\substack{N_3,N_4,\cdots,N_{k+1}\\ (\leq N_2)}} \sum_{n_1,n_2,\cdots,n_{k+1}}a^{N_1}(n_1,\cdots,n_{k+1}) \Xi^{n_1,\cdots,n_{k+1}}_{N_1,\cdots,N_{k+1}}\right|\\
  =&\left|\sum_{N_1}\sum_{n_1,\cdots,n_{k+1}}a^{N_1}(n_1,\cdots,n_{k+1}) \sum_{N_2\leq N_1}\sum_{\substack{N_3,N_4,\cdots,N_{k+1}\\ (\leq N_2)}} \Xi^{n_1,\cdots,n_{k+1}}_{N_1,\cdots,N_{k+1}}\right|\\
 \lesssim &\sum_{n_1,\cdots,n_{k+1}}\left(1+|n_1|+\cdots+|n_{k+1}|\right)^{-3(k+1)d}\sum_{N_1}\left| \sum_{\substack{N_2 \\(\leq N_1)}}\sum_{\substack{N_3,N_4,\cdots,N_{k+1}\\ (\leq N_2)}} \Xi^{n_1,\cdots,n_{k+1}}_{N_1,\cdots,N_{k+1}} \right|.
\end{split}
\end{equation}
As a result, we are reduced to proving the following estimates:
\begin{enumerate}
  \item For $p_j\in(0,\infty)$ s.t. $\sum_{j=1}^{k+1}\frac{1}{p_j}=1$, assume that the Riesz transform \(\Re=\nabla(-\Delta+V)^{-1 / 2}\) is bounded on $L^{p_j}, j=1,\ldots,k+1$, then we have
\begin{equation}\label{1.2.1}
 \begin{split}
  \sum\limits_{N_1\geq 1}\left| \sum_{\substack{N_2 \\(\leq N_1)}}\sum_{\substack{N_3,N_4,\cdots,N_{k+1}\\ (\leq N_2)}} \Xi^{n_1,\cdots,n_{k+1}}_{N_1,\cdots,N_{k+1}} \right| \lesssim \prod_{j=1}^{k+1} \langle n_j\rangle^d \|f_j\|_{L^{p_j}}.
 \end{split}
\end{equation}
  \item Suppose instead that $V$ satisfies \(\mathrm{H} 3^{*} \) and we do not have the assumption of Riesz transform \(\Re=\nabla(-\Delta+V)^{-1 / 2}\) being bounded on $L^{p_j}, j=1,\ldots,k+1$ any more, then for $p_j, \bar{p}_j\in(0,\infty)$ s.t. $\sum_{j=1}^{k+1}\frac{1}{p_j}=1, \sum_{j=1}^{k+1}\frac{1}{\tilde{p}_j}=1+\frac{\epsilon}{d}$, we have
\begin{equation}\label{1.2.2}
 \begin{split}
  \sum\limits_{N_1\geq 1}\left| \sum_{\substack{N_2 \\(\leq N_1)}}\sum_{\substack{N_3,N_4,\cdots,N_{k+1}\\ (\leq N_2)}} \Xi^{n_1,\cdots,n_{k+1}}_{N_1,\cdots,N_{k+1}} \right|\lesssim \prod_{j=1}^{k+1} \langle n_j\rangle^d \|f_j\|_{L^{p_j}}+\prod_{j=1}^{k+1} \langle n_j\rangle^d\|f_j\|_{L^{\tilde{p}_j}}.
 \end{split}
\end{equation}
\end{enumerate}
{\bf{Step 2}}. Recall  $$M(\xi_1,\cdots,\xi_k,\xi_{k+1})=\int_{\mathbb{R}^{d}} e(x,\xi_1)e(x,\xi_2)\cdots e(x,\xi_{k+1})dx.$$
Using Green's formula and the definition of distorted plane wave functions, formally we have
\begin{align*}
&|\xi_1|^2M(\xi_1,\cdots,\xi_k,\xi_{k+1})\\
&=\int_{\mathbb{R}^{d}} He(x,\xi_1)e(x,\xi_2)\cdots e(x,\xi_{k+1})dx\\
&=\int_{\mathbb{R}^{d}} V(x)e(x,\xi_1)e(x,\xi_2)\cdots e(x,\xi_{k+1})dx \\
&\qquad - \int_{\mathbb{R}^{d}}  e(x,\xi_1)\Delta[e(x,\xi_2)\cdots e(x,\xi_{k+1})]dx\\
&=\sum_{j=2}^{k+1}|\xi_j|^2 M(\xi_1,\cdots,\xi_k,\xi_{k+1})\\
&-(k-1)\int_{\mathbb{R}^{d}}V(x) e(x,\xi_1)e(x,\xi_2)\cdots e(x,\xi_{k+1})dx\\
&+2\sum_{2\leq j<l\leq k+1}\int_{\mathbb{R}^{d}} e(x,\xi_1)e(x,\xi_2)\cdots\nabla e(x,\xi_j)\cdot \nabla e(x,\xi_l)\cdots e(x,\xi_{k+1})dx
\end{align*}
which holds in the sense of distribution.
Thus,
\begin{align*}
 &\Xi^{n_1,\cdots,n_{k+1}}_{N_1,\cdots,N_{k+1}}\\
  =& \idotsint f^{\sharp}_{1;N_1,n_1}(\xi_1) f^{\sharp}_{2;N_2,n_2,N_1}(\xi_2)\cdots f^{\sharp}_{k+1;N_{k+1},n_{k+1},N_1}(\xi_{k+1})\\
   &\times M(\xi_1,\cdots,\xi_{k+1}) d\xi_1\cdots d\xi_{k+1}\\
   =& \sum_{j=2}^{k+1}\idotsint\frac{|\xi_j|^2}{|\xi_1|^2}f^{\sharp}_{1;N_1,n_1}(\xi_1) f^{\sharp}_{2;N_2,n_2,N_1}(\xi_2)\cdots f^{\sharp}_{k+1;N_{k+1},n_{k+1},N_1}(\xi_{k+1})\\
    &\times M(\xi_1,\cdots,\xi_{k+1}) d\xi_1\cdots d\xi_{k+1}\\
  &-(k-1)\frac{1}{N_1^2}\int \underline{f}_{1;N_1,n_1}(x) f_{2;N_2,n_2,N_1}(x)\cdots f_{k+1;N_{k+1},n_{k+1},N_1}(x)V(x)dx\\
  &+2\sum_{2\leq j<l\leq k+1}\frac{1}{N_1^2}\int \underline{f}_{1;N_1,n_1}(x) f_{2;N_2,n_2,N_1}(x)\\
  &\quad\times \cdots\times\nabla f_{j;N_j,n_j,N_1}(x)\cdot \nabla f_{l;N_l,n_l,N_1}(x)\cdots f_{k+1;N_{k+1},n_{k+1},N_1}(x)dx\\
  &=:I+II+III.
\end{align*}
Here we denote
$\underline{f}_{1;N_{1}, n_{1}}:=\mathcal{F}^{\sharp^{-1}} \frac{N_{1}^{2}}{|\xi_1|^{2}} \mathcal{F}^{\sharp} f_{1;N_{1}, n_{1}}=\mathcal{F}^{\sharp^{-1}} \mathrm{e}^{2 \pi \mathrm{i} \frac{n_{1} \xi_1}{K N_{1}}} \underline{\phi}\left(\frac{\xi_1}{N_{1}}\right) \mathcal{F}^{\sharp} f_1, \, \text { with } \underline{\phi}(\xi):=|\xi|^{-2} \phi(\xi).$

We start with the contribution of $I$: Writing $f_{l;\leq N_j,n_l,N_1}=\sum\limits_{N_l\leq N_j}f_{l; N_l,n_l,N_1},$ $2\leq j\leq l\leq k+1,$ and
$
\tilde{f}_{j;N_{j}, n_{j}, N_{1}}:=\mathcal{F}^{\sharp^{-1}}  \frac{|\xi_j|^{2}}{N_{j}^{2}} \mathcal{F}^{\sharp} f_{j;N_{j}, n_{j}, N_{1}}=\mathcal{F}^{\sharp^{-1}} \mathrm{e}^{2 \pi \mathrm{i} \frac{n_{j} \xi_j}{K N_{1}}} \tilde{\phi}\left(\frac{\xi_j}{N_{j}}\right) \mathcal{F}^{\sharp} f_j, \,j=2,\cdots,k+1,\, \text { with } \tilde{\phi}(\xi):=|\xi|^{2} \phi(\xi),
$
 By H\"{o}lder's inequality and Lemma \ref{re1.6}(e), (b) and (d), for $1=\sum\limits_{l=1}^{k+1} \frac{1}{p_l}$, we have

\begin{equation}\label{eq1.3.5'}
\begin{split}
&\sum\limits_{N_1}\left| \sum_{N_2\leq N_1}\sum_{\substack{N_3,N_4,\cdots,N_{k+1}\\ (\leq N_2)}} I \right|\\
\leq&\sum_{j=2}^{k+1}\sum\limits_{N_1}\Big| \sum_{N_2\leq N_1}\sum_{\substack{N_3,N_4,\cdots,N_{k+1}\\ (\leq N_2)}} \int\frac{|\xi_j|^2}{|\xi_1|^2}f^{\sharp}_{1;N_1,n_1}(\xi_1) f^{\sharp}_{2;N_2,n_2,N_1}(\xi_2)\\
&\cdots f^{\sharp}_{k+1;N_{k+1},n_{k+1},N_1}(\xi_{k+1}) M(\xi_1,\cdots,\xi_{k+1}) d\xi_1\cdots d\xi_{k+1} \Big|\\
=&\sum_{j=2}^{k+1}\sum\limits_{N_1}\Big| \sum_{N_2\leq N_1}\sum_{N_j\leq N_2} \frac{N_j^2}{N_1^2} \int  \underline{f}^{\sharp}_{1;N_1,n_1}(\xi_1)f^{\sharp}_{2;N_2,n_2,N_1}(\xi_2) f^{\sharp}_{3;\leq N_2,n_3,N_1}(\xi_3)\\
&\cdots f^{\sharp}_{j-1;\leq N_2,n_{j-1},N_1}(\xi_{j-1}) \tilde{f}^{\sharp}_{j;N_j,n_j,N_1}(\xi_j) \\
&\quad \times f^{\sharp}_{j+1;\leq N_{2},n_{j+1},N_1}(\xi_{j+1}) \cdots f^{\sharp}_{k+1;\leq N_{2},n_{k+1},N_1}(\xi_{k+1}) M(\xi_1,\cdots,\xi_{k+1}) d\xi_1\cdots d\xi_{k+1} \Big|\\
=&\sum_{j=2}^{k+1}\sum\limits_{N_1}\Big| \sum_{N_2\leq N_1}\sum_{N_j\leq N_2} \frac{N_j^2}{N_1^2} \int  \underline{f}_{1;N_1,n_1}(x) f_{2;N_2,n_2,N_1}(x) f_{3;\leq N_2,n_3,N_1}(x) \\
&\cdots f_{j-1;\leq N_2,n_{j-1},N_1}(x) \tilde{f}_{j;N_j,n_j,N_1}(x) f_{j+1;\leq N_{2},n_{j+1},N_1}(x) \cdots f_{k+1;\leq N_{2},n_{k+1},N_1}(x) dx \Big|\\
\leq& \sum_{j=2}^{k+1}\int\sum\limits_{\substack {N_1,N_2\\(N_2\leq N_1)}}  \frac{N_2^2}{N_1^2} |\underline{f}_{1;N_1,n_1}(x)| |f_{2;N_2,n_2,N_1}(x)| |f_{3;\leq N_2,n_3,N_1}(x)|\cdots |f_{j-1;\leq N_2,n_{j-1},N_1}(x)|   \\
&\times \sup_{N_j\leq N_2}|\tilde{f}_{j;N_j,n_j,N_1}(x)| |f_{j+1;\leq N_{2},n_{j+1},N_1}(x)| \cdots |f_{k+1;\leq N_{2},n_{k+1},N_1}(x)| dx   \\
  \lesssim& \Big\|\Big(\sum_{N_{1}}   \left|\underline{f}_{1; N_{1},n_{1}}  \right|^2\Big)^{1/2}\Big\|_{L^{p_{1}}} \Big\| \Big(\sum_{N_{2}}  \sup_{\substack{N_1\\(N_{2}\leq N_1)}} \left|f_{2; N_{2},n_{2},N_1}  \right|^2\Big)^{1/2}\Big\|_{L^{p_{2}}}  \\
  & \times \Big\|\sup_{\substack{N_j, N_1 \\ (N_j\leq N_1)}}\left|\tilde{f}_{j; N_j,n_j,N_1} \right|\Big\|_{L^{p_j}} \prod^{k+1}_{l=3,l\neq j}\Big\|\sup_{\substack{N_2, N_1 \\ (N_2\leq N_1)}}\left|f_{l; \leq N_2,n_l,N_1} \right|\Big\|_{L^{p_l}}\\
   \lesssim &  \prod_{l=1}^{k+1}\langle n_l\rangle^d \left\| f_l \right\|_{L^{p_l}}.
\end{split}
\end{equation}


For $II$, we use the Sobolev embedding in the distorted Fourier transform setting. Let $s_j>0$ satisfy $\sum_{j=1}^{2}s_j<1, s_j\leq \frac{d}{p_j}$, and we denote $\frac{1}{q_j}=\frac{1}{p_j}-\frac{s_j}{d}$, $j=1,2$.  By H\"{o}lder inequality, Lemma \ref{re1.6}(e), (b), (d) and Sobolev embedding, we have
\begin{equation}\label{req1.3.5'}
\begin{split}
&\sum\limits_{N_1\geq1}\left| \sum\limits_{N_2\leq N_1} \sum\limits_{\substack{N_3,\cdots,N_{k+1}\\  (\leq N_2)}} II \right|\\
   \lesssim & \sum\limits_{N_1\geq1}\left| \sum\limits_{N_2\leq N_1} \sum\limits_{\substack{N_3,\cdots,N_{k+1}\\  (\leq N_2)}}\frac{1}{N_1^2}\int \underline{f}_{1;N_1,n_1}(x) f_{2;N_2,n_2,N_1}(x)\cdots f_{k+1;N_{k+1},n_{k+1},N_1}(x)V(x)dx \right| \\
      \lesssim& \sum\limits_{\substack{N_1,N_2\\  ( N_2\leq N_1,N_1\geq1)}}\frac{N_2^{s_2}}{N_1^{2-s_1}} \left|\int \left(N_1^{-s_1}\underline{f}_{1;N_1,n_1}\right) \left(N_2^{-s_2}f_{2;N_2,n_2,N_1}\right) \left(\prod_{j=3}^{k+1}f_{j;\leq N_2,n_j,N_1}\right)V(x)dx\right|\\
      \lesssim & \int \left(\sum_{N_1\geq1}N_1^{-2s_1}\left|\underline{f}_{1;N_{1}, n_{1}}  \right|^2\right)^{1/2} \left(\sum_{N_2}N_2^{-2s_2}\sup_{\substack{N_1\geq1\\(N_1\geq N_2)}}\left|f_{2;N_{2}, n_{2}, N_{1}} \right|^2\right)^{1/2}\\
   &\quad\times \prod_{j=3}^{k+1}\sup_{\substack{N_1,N_2\\(N_2\leq N_1)}}\left|f_{j; \leq N_2,n_{j},N_1} \right| |V(x)|dx \\
   \lesssim &\|V\|_{L^{\frac{d}{s_1+s_2}}}\prod_{j=1}^{2}\langle n_j\rangle^d \left\|f_j\right\|_{\dot{W}^{-s_j,q_j}_{\sharp}} \prod_{l=3}^{k+1}\langle n_l\rangle^d \left\| f_l \right\|_{L^{p_l}}\\
   \lesssim & \prod_{j=1}^{k+1}\langle n_j\rangle^d \left\| f_j \right\|_{L^{p_j}}
\end{split}
\end{equation}

Finally we estimate the contribution of the term $III$, which is also the one that causes us to complete the proofs separately according to the assumption that the Riesz transform is bounded or unbounded. we first bound the contribution of III under the assumption that $\mathfrak{R}$  is bounded on $L^{p_j} (\mathbb {R}^d),\,j=1,\cdots,k+1$.
\begin{equation*}
\begin{split}
&\sum\limits_{N_1\geq 1}\left| \sum_{N_2\leq N_1}\sum_{\substack{N_3,N_4,\cdots,N_{k+1}\\ (\leq N_2, N_l\leq N_j)}} III \right|\\
\lesssim& \int \sum_{ 1\leq N_1} \sum_{N_2\leq N_1}  \sum_{\substack{ N_l\leq N_j\\(\leq N_2)}} \frac{N_l N_j}{N_1^2} |\underline{f}_{1;N_1,n_1}| |f_{2; N_2,n_2,N_1}|  \left| \mathfrak{R} \tilde{f}_{j;N_j,n_j,N_1}\right| \left|\mathfrak{R} \tilde{f}_{l;N_l,n_l,N_1}\right| \\
&\quad\times\prod^{k+1}_{m=3,m\neq j,l} |f_{m; \leq N_2,n_m,N_1}| dx\\
\end{split}
\end{equation*}
\begin{align*}
       \lesssim & \int  \left(\sum_{N_2} \sup_{N_1(\geq N_2)}\left|f_{2;N_{2}, n_{2}, N_{1}} \right|^2\right)^{1/2} \left(\sum_{N_1\geq 1}\left|\underline{f}_{1;N_{1}, n_{1}} \right|^2\right)^{1/2}  \\
       & \times \prod_{m=3,m\neq l, m\neq j}^{k+1}\sup_{N_2\leq N_1}|f_{m; \leq N_2,n_m,N_1}| \sup_{N_1\geq N_l}\left|\mathfrak{R} \tilde{f}_{l;N_l,n_l,N_1}\right| \sup_{N_1\geq N_j} \left| \mathfrak{R} \tilde{f}_{j;N_j,n_j,N_1}\right|dx\\
 \lesssim & \prod_{m=1}^{k+1}\langle n_m\rangle^d \left\| f_m \right\|_{L^{p_m}}.
\end{align*}
This finishes the proof of \eqref{1.2.1}.

{\bf{Step 3}}. proof of \eqref{1.2.2}. We assume that  the potential $V$ satisfies assumption $H 3^{*},$ which implies the $L^{p}\left(\mathbb{R}^{d}\right)$ boundedness of the operator $\mathfrak{B}: f \mapsto \nabla(I-\Delta+V)^{-1 / 2} f=\nabla\left\langle D^{\sharp}\right\rangle^{-1} f$. This follows directly by noting that $\left\langle D^{\sharp}\right\rangle^{-1}=$ $\Omega\langle\nabla\rangle^{-1} \Omega^{*},$ using assumption $H 3^{*}$ and the boundedness of $\nabla\langle\nabla\rangle^{-1}$. We denote $\tilde{f}_{j;N_j,n_j,N_1}:=\frac{\langle D^{\sharp}\rangle}{\langle N_j\rangle} f_{j;N_j,n_j,N_1}$. We split the analysis into three cases, depending on the size of $N_l$ and $N_j$, by symmetry, we may further assume  $N_l\leq N_j$.

Case 1. $N_l\leq N_j<1$.  In this case, applying Lemma \ref{re1.6}(e), (b), (d), $l^2 \subseteq l^{\infty}$ and Sobolev embedding, for $\sum_{j=1}^{k+1}\frac{1}{\tilde{p}_j}=1+\frac{\epsilon}{d}$, and $2<j\leq l<k+1$,  we bound as follows,

\begin{equation*}
\begin{split}
&\sum\limits_{N_1\geq 1}\left| \sum_{N_2\leq N_1}\sum_{\substack{N_3,N_4,\cdots,N_{k+1}\\ (\leq N_2, N_l\leq N_j\leq1 )}} III \right|\\
\lesssim& \int \sum_{ 1\leq N_1}\frac{1}{N_1^{2-\epsilon}}   \sum_{N_2\leq N_1}\sum_{\substack{N_l\leq N_j\\ \leq \min(N_2,1)}} \frac{N_l^{\epsilon}}{N_1^{\epsilon}} |\underline{f}_{1;N_1,n_1}| |f_{2; N_2,n_2,N_1}|  \left| \mathfrak{B} \tilde{f}_{j;N_j,n_j,N_1}\right| \\
 &\quad \times N_l^{-\epsilon}\left|\mathfrak{B} \tilde{f}_{l;N_l,n_l,N_1}\right| \prod^{k+1}_{m=3,m\neq j,l} |f_{m; \leq N_2,n_m,N_1}| dx\\
 \lesssim & \int  \left(\sum_{N_2} \sup_{N_1(\geq N_2)}\left|f_{2;N_{2}, n_{2}, N_{1}} \right|^2\right)^{1/2} \left(\sum_{N_1\geq 1}\left|\underline{f}_{1;N_{1}, n_{1}} \right|^2\right)^{1/2} \\
       & \times \prod_{m=3,m\neq l, m\neq j}^k\sup_{N_2\leq N_1}|f_{m; \leq N_2,n_m,N_1}| \sup_{N_1\geq N_l}\left|(N_{l})^{-\epsilon}\mathfrak{B} \tilde{f}_{l;N_l,n_l,N_1}\right| \sup_{N_1\geq N_j} \left| \mathfrak{B} \tilde{f}_{j;N_j,n_j,N_1}\right|dx\\
\lesssim & \prod_{m=1,m\neq l}^{k+1}\langle n_m\rangle^d \left\| f_m \right\|_{L^{p_m}}\times \langle n_{l}\rangle^d \left\| f_{l} \right\|_{\dot{W}^{-\epsilon,p_{l}}}\\
 \lesssim & \prod_{m=1}^{k+1}\langle n_m\rangle^d \left\| f_m \right\|_{L^{\tilde{p}_m}}.
\end{split}
\end{equation*}
       

Case 2.  $N_l\leq 1\leq N_j$. Similarly to Case 1, for $\sum_{j=1}^{k+1}\frac{1}{\tilde{p}_j}=1+\frac{\epsilon}{d}$, we can obtain

\begin{equation*}
\begin{split}
&\sum\limits_{N_1\geq 1}\left| \sum_{N_2\leq N_1}\sum_{\substack{N_3,N_4,\cdots,N_{k+1}\\ (\leq N_2,N_l\leq1\leq N_j)}} III \right|\\
\lesssim& \int \sum_{ 1\leq N_1}\frac{1}{N_1^{1-\epsilon}}   \sum_{N_2\leq N_1}\sum_{\substack{N_l\leq1\leq N_j\\(\leq N_2)}} \frac{N_j N_l^{\epsilon}}{N_1^{1+\epsilon}} |\underline{f}_{1;N_1,n_1}| |f_{2; N_2,n_2,N_1}|  \left| \mathfrak{B} \tilde{f}_{j;N_j,n_j,N_1}\right| \\
 &\quad \times N_l^{-\epsilon}\left|\mathfrak{B} \tilde{f}_{l;N_l,n_l,N_1}\right| \prod^{k+1}_{m=3,m\neq j,l} |f_{m; \leq N_2,n_m,N_1}| dx\\
       \lesssim & \int  \left(\sum_{N_2} \sup_{N_1(\geq N_2)}\left|f_{2;N_{2}, n_{2}, N_{1}} \right|^2\right)^{1/2} \left(\sum_{N_1\geq 1}\left|\underline{f}_{1;N_{1}, n_{1}} \right|^2\right)^{1/2}  \\
       & \times \prod_{m=3,m\neq l, m\neq j}^{k+1}\sup_{N_2\leq N_1}|f_{m; \leq N_2,n_m,N_1}| \sup_{N_1\geq N_l}\left|(N_{l})^{-\epsilon}\mathfrak{B} \tilde{f}_{l;N_l,n_l,N_1}\right| \sup_{N_1\geq N_j} \left| \mathfrak{B} \tilde{f}_{j;N_j,n_j,N_1}\right|dx\\
\lesssim & \prod_{m=1,m\neq l}^{k+1}\langle n_m\rangle^d \left\| f_m \right\|_{L^{p_m}}\times \langle n_{l}\rangle^d \left\| f_{l} \right\|_{\dot{W}^{-\epsilon,p_{l}}}\\
 \lesssim & \prod_{m=1}^{k+1}\langle n_m\rangle^d \left\| f_m \right\|_{L^{\tilde{p}_m}}.
\end{split}
\end{equation*}


Case 3. $1\leq N_l\leq N_j$. Similarly, for $\sum_{j=1}^{k+1}\frac{1}{p_j}=1$, we have
\begin{equation*}
\begin{split}
&\sum\limits_{N_1\geq 1}\left| \sum_{N_2\leq N_1}\sum_{\substack{N_3,N_4,\cdots,N_{k+1}\\ (\leq N_2,1\leq N_l\leq N_j)}} III \right|\\
\lesssim& \int \sum_{ 1\leq N_1} \sum_{N_2\leq N_1}  \sum_{\substack{1\leq N_l\leq N_j\\(\leq N_2)}} \frac{N_l N_j}{N_1^2} |\underline{f}_{1;N_1,n_1}| |f_{2; N_2,n_2,N_1}|  \left| \mathfrak{B} \tilde{f}_{j;N_j,n_j,N_1}\right| \left|\mathfrak{B} \tilde{f}_{l;N_l,n_l,N_1}\right|\\
\end{split}
\end{equation*}
\begin{align*}
&\quad\times\prod^{k+1}_{m=3,m\neq j,l} |f_{m; \leq N_2,n_m,N_1}| dx\\
       \lesssim & \int  \left(\sum_{N_2} \sup_{N_1(\geq N_2)}\left|f_{2;N_{2}, n_{2}, N_{1}} \right|^2\right)^{1/2} \left(\sum_{N_1\geq 1}\left|\underline{f}_{1;N_{1}, n_{1}} \right|^2\right)^{1/2}  \\
       & \times \prod_{m=3,m\neq l, m\neq j}^{k+1}\sup_{N_2\leq N_1}|f_{m; \leq N_2,n_m,N_1}| \sup_{N_1\geq N_l}\left|\mathfrak{B} \tilde{f}_{l;N_l,n_l,N_1}\right| \sup_{N_1\geq N_j} \left| \mathfrak{B} \tilde{f}_{j;N_j,n_j,N_1}\right|dx\\
   \lesssim & \prod_{m=1}^{k+1}\langle n_m\rangle^d \left\| f_m \right\|_{L^{p_m}}.
\end{align*}

\end{proof}

\section{application}
\subsection{Application 1: Leibniz's law of integer order derivations} Taking one test function $f_{k+1}\in L^r(\mathbb{R}^d)$, for $s\geq0$, since the operator $H=-\nabla +V$ is self-adjoint, and Plancherel's theorem still holds for distorted Fourier transform,   we can get   
\begin{equation*}
	\begin{split} &\int_{\mathbb{R}^{d}}H^s T(f_1,\cdots,f_k)f_{k+1}(x)dx\\
        =&\int_{\mathbb{R}^{d}}T(f_1,\cdots,f_k)H^s f_{k+1}(x)dx\\
		 =&\idotsint m(\xi_1,\cdots,\xi_k)f_1^{\sharp}(\xi_1)\cdots f^{\sharp}_k(\xi_k)e(x,\xi_1)\cdots e(x,\xi_k)H^s f_{k+1}(x)d\xi_1 \cdots d\xi_kdx\\
        =&\idotsint m(\xi_1,\cdots,\xi_k)f_1^{\sharp}(\xi_1)\cdots f^{\sharp}_k(\xi_k)f_{k+1}^{\sharp}(\xi_{k+1})\\
        &\quad\times|\xi_{k+1}|^{2s}M(\xi_1,\cdots,\xi_k,\xi_{k+1}) d\xi_1 \cdots d\xi_k d\xi_{k+1}
	\end{split}
\end{equation*}
where $M(\xi_1,\cdots,\xi_k,\xi_{k+1})=\int_{\mathbb{R}^{d}} e(x,\xi_1)e(x,\xi_2)\cdots e(x,\xi_{k+1})dx$ is nonlinear spectral distribution. Let $s=1$ and $s=\frac{1}{2}$ respectively for example,
using Green's formula and the definition of distorted plane wave functions, formally in the sense of distribution, we have

{\bf{Case $s=1$}}:
\begin{align*}
&|\xi_{k+1}|^2M(\xi_1,\cdots,\xi_k,\xi_{k+1})\\
&=\int_{\mathbb{R}^{d}} e(x,\xi_1)e(x,\xi_2)\cdots H e(x,\xi_{k+1})dx\\
&=\int_{\mathbb{R}^{d}} V(x)e(x,\xi_1)e(x,\xi_2)\cdots e(x,\xi_{k+1})dx \\
&\qquad - \int_{\mathbb{R}^{d}}  e(x,\xi_{k+1})\Delta[e(x,\xi_1)\cdots e(x,\xi_{k})]dx\\
&=\sum_{j=1}^{k}|\xi_j|^2 M(\xi_1,\cdots,\xi_k,\xi_{k+1})\\
&-(k-1)\int_{\mathbb{R}^{d}}V(x) e(x,\xi_1)e(x,\xi_2)\cdots e(x,\xi_{k+1})dx\\
&+2\sum_{1\leq j<l\leq k}\int_{\mathbb{R}^{d}} e(x,\xi_1)e(x,\xi_2)\cdots\nabla e(x,\xi_j)\cdot \nabla e(x,\xi_l)\cdots e(x,\xi_{k+1})dx
\end{align*}
Therefore,
\begin{equation}\label{y3.1}
	\begin{split} &\int_{\mathbb{R}^{d}}H T(f_1,\cdots,f_k)f_{k+1}(x)dx\\
        =&\int_{\mathbb{R}^{d}}T(H f_1,\cdots,f_k)f_{k+1}(x)dx+\cdots+\int_{\mathbb{R}^{d}}T( f_1,\cdots, H f_k) f_{k+1}(x)dx\\
        &\quad -(k-1) \int_{\mathbb{R}^{d}}T( f_1,\cdots,f_k)V(x)f_{k+1}(x)dx\\
		&+2\sum_{1\leq j<l\leq k} \idotsint m(\xi_1,\cdots,\xi_k)f_1^{\sharp}(\xi_1)\cdots f^{\sharp}_k(\xi_k)f_{k+1}^{\sharp}(\xi_{k+1})\\
&\quad\times\tilde{M}_{j,l}(\xi_1,\cdots,\xi_k,\xi_{k+1}) d\xi_1 \cdots d\xi_k d\xi_{k+1}
	\end{split}
\end{equation}

{\bf{Case $s=\frac{1}{2}$}}:
\begin{align*}
&|\xi_{k+1}|^2M(\xi_1,\cdots,\xi_k,\xi_{k+1})\\
&=\int_{\mathbb{R}^{d}} e(x,\xi_1)e(x,\xi_2)\cdots H e(x,\xi_{k+1})dx\\
&=\int_{\mathbb{R}^{d}} V(x)e(x,\xi_1)e(x,\xi_2)\cdots e(x,\xi_{k+1})dx \\
&\qquad - \int_{\mathbb{R}^{d}}  e(x,\xi_{k+1})\Delta[e(x,\xi_1)\cdots e(x,\xi_{k})]dx\\
&=\int_{\mathbb{R}^{d}}V(x) e(x,\xi_1)e(x,\xi_2)\cdots e(x,\xi_{k+1})dx\\
&+\sum_{1\leq j\leq k}\int_{\mathbb{R}^{d}} e(x,\xi_1)e(x,\xi_2)\cdots\nabla e(x,\xi_j) e(x,\xi_{j+1})\cdots \nabla e(x,\xi_{k+1})dx
\end{align*}
Therefore,
\begin{equation}\label{y3.1'}
	\begin{split} &\int_{\mathbb{R}^{d}}H^{\frac{1}{2}} T(f_1,\cdots,f_k)f_{k+1}(x)dx\\
=&\idotsint m(\xi_1,\cdots,\xi_k)f_1^{\sharp}(\xi_1)\cdots f^{\sharp}_k(\xi_k)(|D^{\sharp}|^{-1}f_{k+1})^{\sharp}(\xi_{k+1})\\
&\quad\times|\xi_{k+1}|^{2}M(\xi_1,\cdots,\xi_k,\xi_{k+1}) d\xi_1 \cdots d\xi_k d\xi_{k+1}\\
        =&\int_{\mathbb{R}^{d}}T( f_1,\cdots,f_k)V(x)|D^{\sharp}|^{-1}f_{k+1}(x)dx\\
		&+\sum_{1\leq j\leq k} \idotsint m(\xi_1,\cdots,\xi_k)f_1^{\sharp}(\xi_1)\cdots f^{\sharp}_k(\xi_k)(|D^{\sharp}|^{-1}f_{k+1})^{\sharp}(\xi_{k+1})\\
&\quad\times\tilde{M}_{j,k+1}(\xi_1,\cdots,\xi_k,\xi_{k+1}) d\xi_1 \cdots d\xi_k d\xi_{k+1}
	\end{split}
\end{equation}
where $\tilde{M}_{j,l}(\xi_1,\cdots,\xi_k,\xi_{k+1})=\int_{\mathbb{R}^{d}} e(x,\xi_1)e(x,\xi_2)\cdots\nabla e(x,\xi_j)\cdot \nabla e(x,\xi_l)\cdots e(x,\xi_{k+1})dx$. To the term involving new  nonlinear spectral distribution  $\tilde{M}_{j,l}(\xi_1,\cdots,\xi_k,\xi_{k+1})$, following the arguments  in the proof of  theorem \ref{mthm1.1'} and using interpolation, while  to the rest terms in \eqref{y3.1} and \eqref{y3.1'}, applying theorem \ref{mthm1.1'} and sobolev embedding,  we derive theorem \ref{reyth1.1}.

\subsection{Application 2: Scattering of the generalized mass-critical NLS with good potential for small data in low dimensions}We consider the following  generalized mass-critical nonlinear Schr\"{o}dinger equation with good potential in low dimensions $d=1,2$:
\begin{equation}\label{y3.2.1}
i u_{t}-\Delta u+Vu=a(x)F(u), \quad u(0, x)=u_{0}(x), \quad x\in\mathbb{R}^d.
\end{equation}
when $d=1$, $F(u)=T(\bar{u},\bar{u},u,u,u)(x)$; when $d=2$, $F(u)=T(\bar{u},u,u)(x)$. where recalling that
\begin{equation}
\begin{split}
  T(f_1,f_2,\ldots,f_k)(x):=&\int_{\mathbb{R}^{d}} \ldots\int_{\mathbb{R}^{d}} m\left(\xi_{1}, \xi_{2},\ldots,\xi_k\right) f_1^{\sharp}\left(\xi_{1}\right) f_2^{\sharp}\left(\xi_{2}\right)\ldots f^{\sharp}_k\left(\xi_{k}\right)\\
 &\qquad e\left(x, \xi_{1}\right) e\left(x, \xi_{2}\right)\ldots e\left(x, \xi_{k}\right) \mathrm{d} \xi_{1} \mathrm{d} \xi_{2} \ldots \mathrm{d} \xi_{k},
\end{split}
\end{equation}
and $m$ is a Coifman-Meyer multiplier satisfying \eqref{1.3.1}. Note that the case $m=1$ corresponds (up to a constant factor) to the product of $f_1,\ldots,f_k$. Therefore, in this case, when $V=0$ and $a(x)\equiv1$, or $a(x)\equiv-1$, the equation \eqref{y3.2.1} becomes a classical mass-critical nonlinear Schr\"{o}dinger equation in $d=1,2$. For good potential $V$: $V$ satisfies \(\mathrm{H} 1, \mathrm{H} 2,\) and \(\mathrm{H} 3 \), and assume that the Riesz transform \(\Re=\nabla(-\Delta+V)^{-1 / 2}\) is bounded on $L^{p}, 1<p<\infty$, we have the scattering of the generalized mass-critical NLS with good potential for small data in low dimensions $d=1,2$.

\begin{theorem}[local wellposedness and  small data scattering]\label{th-y3.3} For $d=1,2$, $a(x)\in L^{\infty}$, the equation \eqref{y3.2.1} has the following properties:
\begin{enumerate}
  \item (Local wellposedness) For any $u_{0} \in L_{x}^{2}\left(\mathbf{R}^{d}\right)$, there exists $T\left(u_{0}\right)>0$ such that \eqref{y3.2.1} is locally well posed on $[-T, T].$ The term $T\left(u_{0}\right)$ depends on the profile of the initial data as well as its size. Moreover, \eqref{y3.2.1} is well posed on an open interval $I \subset \mathbf{R}$, $0 \in I$;
  \item (Small data scattering) there exists $\varepsilon_{0}(d)>0$, such that if
\begin{equation}\label{yeq3.31}
\left\|u_{0}\right\|_{L^{2}\left(\mathbf{R}^{d}\right)} \leq \varepsilon_{0}(d),
\end{equation}
then \eqref{y3.2.1} is globally well posed and scattering, i.e. there exist $u^{\pm} \in L_{x}^{2}(\mathbb{R}^d )$ such that
\begin{equation} \label{eq1.3''}
\|u(t)- e^{it\Delta} u^{\pm}\|_{L_{x}^{2}} \to 0, \ \text{ as } t\to \pm \infty.
\end{equation}
\end{enumerate}
\end{theorem}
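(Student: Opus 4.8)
The plan is to run a standard fixed-point argument in Strichartz spaces, using the multilinear distorted multiplier estimate (Theorem~\ref{mthm1.1'}) to control the nonlinearity $a(x)F(u)$. First I would set up the relevant function spaces: for $d=1$ the nonlinearity $T(\bar u,\bar u,u,u,u)$ is quintic (mass-critical in $d=1$), so I would work in $L^6_tL^6_x(I\times\mathbb{R})$, which is the mass-critical admissible pair; for $d=2$ the nonlinearity $T(\bar u,u,u)$ is cubic (mass-critical in $d=2$), and I would work in $L^4_tL^4_x(I\times\mathbb{R}^2)$. In either case, writing $u$ via Duhamel's formula for $H=-\Delta+V$,
\begin{equation*}
u(t)=e^{itH}u_0 - i\int_0^t e^{i(t-\tau)H}\big(a(x)F(u(\tau))\big)\,d\tau,
\end{equation*}
I would define the solution map $\Phi$ on a ball in the Strichartz space $S(I):=C_tL^2_x\cap L^{p}_tL^{q}_x(I\times\mathbb{R}^d)$ for the mass-critical admissible pair $(p,q)$ (with $(p,q)=(6,6)$ for $d=1$, $(4,4)$ for $d=2$), and show it is a contraction for $I$ small or for $\|u_0\|_{L^2}$ small with $I=\mathbb{R}$.

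The key estimate is the nonlinear one. Since $a\in L^\infty$, we have $\|a(x)F(u)\|_{L^{p'}_tL^{q'}_x}\lesssim\|F(u)\|_{L^{p'}_tL^{q'}_x}$, so it suffices to bound $F(u)$ in the dual Strichartz space. For $d=2$ one checks the Hölder relation $\frac{1}{q'}=\frac{3}{q}$ with $q=4$ (i.e. $\frac43=3\cdot\frac43\cdot\frac{1}{4}$... more precisely $q'=\tfrac43$ and $3/q=3/4$; the correct pairing is in $L^{4/3}_x$ from three factors in $L^4_x$, and similarly in time $1/p'=3/p$ with $p=4$), and then Theorem~\ref{mthm1.1'}(1), applied with $k=3$, $p_1=p_2=p_3=q=4$ and $r'=q'=4/3$ — valid since the Riesz transform is assumed bounded on all $L^p$, $1<p<\infty$ — gives
\begin{equation*}
\|T(\bar u,u,u)(t)\|_{L^{q'}_x}\lesssim \|u(t)\|_{L^q_x}^3,
\end{equation*}
so that $\|F(u)\|_{L^{p'}_tL^{q'}_x}\lesssim\|u\|_{L^p_tL^q_x}^3$; for $d=1$ the same scheme with $k=5$, $p_j=q=6$, $r'=q'=6/5$ gives $\|F(u)\|_{L^{p'}_tL^{q'}_x}\lesssim\|u\|_{L^p_tL^q_x}^5$. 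Combining with the Strichartz estimates from the preliminary lemma (which hold under H1, H2, H3$^*$; under the stated hypotheses H3 suffices for the $L^2$-based estimates we use), $\Phi$ maps a ball of radius $\sim\|u_0\|_{L^2}$ into itself and contracts once $\|u_0\|_{L^2}$ (or the local-in-time Strichartz norm of $e^{itH}u_0$, which is small for $|I|$ small by dominated convergence) is small enough; this yields local wellposedness unconditionally and global wellposedness plus a uniform global Strichartz bound under the smallness assumption \eqref{yeq3.31}.

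For scattering, I would use the global bound $\|u\|_{L^p_tL^q_x(\mathbb{R}\times\mathbb{R}^d)}<\infty$ to show that $e^{-itH}u(t)$ is Cauchy in $L^2$ as $t\to\pm\infty$: indeed $e^{-itH}u(t)-e^{-isH}u(s)=-i\int_s^t e^{-i\tau H}(a F(u))\,d\tau$, whose $L^2$ norm is $\lesssim\|F(u)\|_{L^{p'}_tL^{q'}_x((s,t)\times\mathbb{R}^d)}\to0$ by the dual Strichartz inequality and dominated convergence. Hence there exist $v^\pm\in L^2$ with $\|u(t)-e^{itH}v^\pm\|_{L^2}\to0$; finally, to replace $e^{itH}$ by the free evolution $e^{it\Delta}$ as in \eqref{eq1.3''}, I would invoke the intertwining relation $e^{itH}=\Omega e^{it\Delta}\Omega^*$ from \eqref{eq1.12} and the $L^2$-boundedness (unitarity on $L^2_{\mathrm{ac}}$, which is all of $L^2$ by H2) of the wave operator $\Omega$, setting $u^\pm:=\Omega^* v^\pm$; then $\|u(t)-e^{it\Delta}u^\pm\|_{L^2}=\|\Omega^*u(t)-e^{it\Delta}\Omega^*v^\pm\|_{L^2}$ up to the harmless identification, and this tends to $0$. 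The main obstacle I anticipate is purely bookkeeping: verifying that the Hölder exponents close exactly at the mass-critical admissible pair in each dimension and that Theorem~\ref{mthm1.1'}(1)'s hypotheses (the Riesz transform boundedness on precisely the exponents $p_j, r'$ that appear) are met — there is no genuine analytic difficulty once the multilinear estimate and Strichartz estimates are in hand, since the problem is $L^2$-critical and small-data.
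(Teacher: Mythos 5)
Your proposal is correct and follows essentially the same route as the paper: Duhamel's formula for $H=-\Delta+V$, the distorted Strichartz estimates, Theorem~\ref{mthm1.1'}(1) applied at the mass-critical admissible pair to close the contraction/bootstrap (with exactly the exponents $\tfrac{2(d+2)}{d}$ and $\tfrac{2(d+2)}{d+4}$ the paper records), and then asymptotic completeness of $\Omega$ to convert $e^{itH}$-scattering into $e^{it\Delta}$-scattering via $u^\pm=\Omega^*v^\pm$. The only cosmetic slip is your final display equality $\|u(t)-e^{it\Delta}u^\pm\|=\|\Omega^*u(t)-e^{it\Delta}\Omega^*v^\pm\|$, which should instead be a triangle-inequality comparison using the intertwining identity $e^{itH}\Omega=\Omega e^{it\Delta}$ together with $\|u(t)-e^{itH}v^\pm\|\to0$; the conclusion is unaffected.
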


\begin{proof}
With the Strichartz estimate for the Schr\"{o}dinger operator $H=-\Delta + V$ in section 2, we can obtain local wellposedness and small data scattering for \eqref{y3.2.1} by the contraction mapping principle and bootstrap argument. Those steps are standard. We recommend to refer to Section 1.3 in \cite{D1} for more details. Due to the different nonlinear terms, we give the nonlinear estimates that may be used below:
\begin{align*}
\left\|\int_{0}^{t} e^{itH} a(x)F(u(\tau)) d \tau\right\|_{L_{t, x} ^{\frac{2(d+2)}{d}}\left(\mathbf{R} \times \mathbf{R}^{d}\right)} & \lesssim_d\|F(u)\|_{L_{t, x}^{\frac{2(d+2)}{d+4}}\left(\mathbf{R} \times \mathbf{R}^{d}\right)} \\
& \lesssim\|u\|^{1+\frac{4}{d}}_{L_{t, x} ^{\frac{2(d+2)}{d}}\left(\mathbf{R} \times \mathbf{R}^{d}\right)},
\end{align*}
and for example, when $d=2$, $T(\bar{u},u,u)(x)-T(\bar{v},v,v)(x)= T(\bar{u}-\bar{v},u,u)(x)+T(\bar{v},u-v,u)(x)+ T(\bar{v},v,u-v)(x)$, therefore
\begin{align*}
&\|a(x)(F(u)-F(v))\|_{L_{t, x}^{\frac{2(d+2)}{d+4}}\left(\mathbf{R} \times \mathbf{R}^{d}\right)}\\
\lesssim & \left(\|u\|^{\frac{4}{d}}_{L_{t, x} ^{\frac{2(d+2)}{d}}\left(\mathbf{R} \times \mathbf{R}^{d}\right)}+\|v\|^{\frac{4}{d}}_{L_{t, x} ^{\frac{2(d+2)}{d}}\left(\mathbf{R} \times \mathbf{R}^{d}\right)}\right) \| u-v \|_{L_{t, x} ^{\frac{2(d+2)}{d}}\left(\mathbf{R} \times \mathbf{R}^{d}\right)}.
\end{align*} Following the standard argument, we first get scattering for \eqref{y3.2.1} with respect to the Schr\"{o}dinger operator $H=-\Delta + V$ as follows,
\begin{equation} \label{eq1.3'''}
\|u(t)- e^{itH} u^{\pm}\|_{L_{x}^{2}} \to 0, \ \text{ as } t\to \pm \infty.
\end{equation}
In addition, since wave operator $\Omega$ exists and is complete, we finally get scattering for \eqref{y3.2.1} in the sense of \eqref{eq1.3''}.
\end{proof}

\section*{Acknowledgments}
This work does not have any conflicts of interest. The author is supported by the China Postdoctoral Science Fundation (No.2019M660556) and Doctoral Fundation of Chongqing Normal University (No.21XLB025).

\begin{center}

\end{center}

\end{document}